\newtheorem{thm}{Theorem}
\newtheorem{cor}{Corollary}
\newtheorem{lem}{Lemma}
\newtheorem{pro}{Proposition}
\newtheorem{rmq}{Remark}
\newcommand{\nbf}{\mathbf{n}}
\newcommand{\xbf}{\mathbf{x}}
\newcommand{\bsa}{\boldsymbol{\alpha}}
\newcommand{\KK}{\mathbb{K}}
\newcommand{\G}{\mathcal{G}}
\newcommand{\g}{\mathfrak{g}}
\newcommand{\zetan}{\tau}
\newcommand{\Y}{\beta}
\newcommand{\ri}{m_i}
\newcommand{\riff}{m_i}
\begin{document}

\title[Quantitative growth of multi-recurrence sequences]{Quantitative growth of multi-recurrences}

\author[C. Fuchs]{Clemens Fuchs}
\address{Clemens Fuchs\newline
	\indent University of Salzburg\newline
	\indent Department of Mathematics\newline
	\indent Hellbrunner Str. 34 \newline
	\indent A-5020 Salzburg, Austria.}
\email{clemens.fuchs@plus.ac.at}

\author[A. Noubissie]{Armand Noubissie}
\address{Armand Noubissie\newline
	\indent Graz University of Technology\newline
	\indent Institute of Analysis and Number Theory\newline
	\indent M\"{u}nzgrabenstrasse 36/II \newline
	\indent A-8010 Graz, Austria.}
\email{armand.noubissie@tugraz.at}
\thanks{\emph{Corresponding author}: A. Noubissie.}
\thanks{The first author was supported by the Austrian Science Fund (FWF), Grant-DOI 10.55776/I4406, the second author was funded by FWF, Grant-DOI 10.55776/P35322.}

\subjclass[2020]{11B37, 11D61, 11D72, 11J87}

\keywords{multirecurrences, growth, number fields, function fields}

\date{\today}
\begin{abstract}
In 1982, Schlickewei and Van der Poorten claimed that any multi-recurrence sequence has, essentially, maximal possible growth rate. Fourty years later, Fuchs and Heintze provided a non-effective proof of this statement. In this paper, we prove a quantitative version of that result by giving an explicit upper bound for the maximal possible growth rate of a multi-recurrence. Moreover, we also give a function field analogue of the result, answering a question posed by Fuchs and Heintze when proving a bound on the growth of multi-recurrences in number fields.
\end{abstract}

\maketitle

\section{Introduction}\label{sec1}
Let $K$ be a number field and let $\KK$ be an algebraic closure of $K$ embedded in $\mathbb{C}$. A sequence $G:\mathbb{N}\rightarrow K$ with $n\mapsto G_n$, abbreviated $(G_n)$, is a linear recurrence sequence (LRS) of order $l$ defined over $K$ if it is defined by the recursion relation
$$
G_{n+l}= a_1G_{n+l-1}+ a_2G_{n+l-2}+ \cdots + a_lG_{n}~(n \in \mathbb{N}),
$$
where $G_0,\ldots,G_{l-1}$ and the coefficients $a_1, \ldots, a_l$ are in $K$, and $l$ is minimal. It is well-known that we can write
\begin{equation} \label{eq:0}
G_n = \sum_{i=1}^{r} P_i(n) \alpha_i^n, 
 \end{equation} 
where $\alpha_1,\ldots,\alpha_r$ are the distinct roots of the characteristic polynomial $P=X^l-a_1X^{l-1}-\cdots-a_l$ of $(G_n)$ in $\KK$ and the coefficients of the polynomials $P_i$ belong to the splitting field $K(\alpha_1,\ldots,\alpha_r)$ of $P$ over $K$ for $i=1,\ldots,r$. The formula in \eqref{eq:0} is called power sum representation, or Binet-type formula, for $(G_n)$. One says that a LRS $(G_n)$ is non-degenerate if none of the ratios $\alpha_i/\alpha_j$ is a root of unity for any pair $(i,j)$ with $1\leq i<j\leq r$. Without loss of generality, let $\alpha_1$ be a root of $P$ with maximum modulus, $a_{ij}$ the coefficients of the polynomials $P_i$ and $\ri = \deg P_i \leq m$ with $m$ the maximum of all the multiplicities of $\alpha_i$ for $i=1,\ldots,r$. In this paper, we are concerned with the rate of growth of the non-degenerate LRS $(G_n)$. It is not difficult to see that there is an effectively computable constant $C_1$ such that, for all $n \geq 1, \vert G_n \vert < C_1 n^m \vert \alpha_1 \vert^n.$ In 1977, Loxton and Van der Poorten conjectured (cf. \cite[Conjecture 2]{LV}) that any non-degenerate LRS has essentially, the maximal possible growth rate, i.e., for any $\epsilon >0$ there is a effectively computable constant $C_2=C_2(\epsilon),$ such that if $ \vert G_n \vert < (\max_i\{ \vert \alpha_i \vert \})^{n(1-\epsilon)}$, then $n<C_2$. Using results of Schmidt \cite{S} and Evertse \cite{E1}, a complete non-effective (qualitative) proof of this conjecture was given by the first author and Heintze in \cite{FH} and, independently, by Karimov et al. in \cite{K}, as well as by Xiao in \cite{X} (cf. \cite[Lemma 4.1]{X}). Recently, the second author in \cite{NOW} gave an explicit upper bound on the number of solutions of that inequality based on the machinery used by Evertse to prove a quantitative version of Subspace theorem (cf. \cite{E}). We also refer to \cite[Lemma 5]{LM} for a related quantitative result also using \cite{E}.

If we allow more than one parameter, we can generalize \eqref{eq:0} to 
\begin{equation} \label{eq:1}
G(n_1, \ldots, n_t) = \sum_{i=1}^{r} P_i(n_1, \ldots, n_t)\alpha_{i1}^{n_1}\cdots \alpha_{it}^{n_t}, 
 \end{equation}
 where $r,t$ are positive integers, the $P_i$'s are polynomials in $t$ variables with coefficients given by the vector $\textbf{a}_i =(a_{i1}, \ldots, a_{i\ri})$, ordered lexicographically, for $i=1,\ldots,r$, and $n_1, \ldots, n_t$ are non-negative integers. The polynomial-exponential function $G: \mathbb{Z}^t \rightarrow K$ with $\nbf=(n_1,\ldots,n_t)\mapsto G(\nbf)$, abbreviated by $(G(\nbf))$, is called a multi-recurrence sequence (or multi-recurrence) and equation \eqref{eq:1} is called its power sum representation. Assume that $\alpha_{ij}$'s are algebraic integers over $K$ and the $P_i$'s have coefficients in $K(\alpha_{11},\alpha_{12},\ldots,\alpha_{rt})$. We put $\bsa_i^{\nbf} := \alpha_{i1}^{n_1} \cdots \alpha_{it}^{n_t}$ for $i=1,\ldots,r$. Without loss of generality, we assume that $\alpha_{11}$ has maximal modulus with $\vert \alpha_{11} \vert >1$ among the $\alpha_{ij}$'s. We also put $\vert \nbf \vert = \max_i\{\vert n_i \vert \}$. The following result was claimed by Schlickewei and Van der Poorten in \cite{VS} and was recently proved by the first author and Heintze in \cite{FH1}: 
{\it 
Fix $\epsilon >0$. Let $\mathcal{A}$ be the set of vectors $\nbf$ such that for any subset $I \subseteq \{1, \ldots, r\}$ with $1 \in I,$ we have $$\sum_{i \in I}^{} P_i(\nbf)\bsa_i^{\nbf} \neq 0.$$ Then, the inequality 
\begin{equation} \label{eq:2}
\vert G(\nbf)\vert < \vert P_1(\nbf) \bsa_1^{\nbf} \vert e^{-\epsilon (\vert n_1\vert+\cdots+\vert n_t\vert)}
 \end{equation} 
 has finitely many solutions in $\mathcal{A}$.
}
\begin{rmq}
 The condition on an element of the set $\{1, \ldots, r\}$ in the result of the first author and Heintze is really necessary, as without such restriction, inequality \eqref{eq:2} may have an infinite number of solutions. Indeed, consider the multi-recurrences $G(n_1,n_2,n_3,n_4) = 2^{n_1} - 2^{n_2} +3^{n_3} - 3^{n_4}$ with 
 $$\alpha_1= (2,1,1,1), \quad \alpha_2= (1,2,1,1), \quad \alpha_3= (1,1,3,1), \quad \alpha_4= (1,1,1,3).$$ Then, the inequality \eqref{eq:2} has infinitely many solutions, namely solutions with $n_1=n_2,n_3=n_4.$
 \end{rmq}

The definitions above make also sense for function fields. Let $K$ be a function field in one variable over $\mathbb{C}$ and let $L$ be a finite algebraic extension of $K$ of genus $\g$. Put $L^*=L\backslash\{0\}$. We shall work with vectors $\bsa=(\alpha_1, \ldots, \alpha_t) \in (L^*)^t$ with $t$ a positive integer, and we denote by $\bsa^\nbf := \alpha_1^{n_1}\cdots \alpha_t^{n_t}$ for an integer vector $\nbf=(n_1, \ldots,n_t)$. We say that two elements $\bsa_1, \bsa_2$ of $(L^*)^t$ are linearly independent modulo $\mathbb{C}$ if there is no non-zero integer vector $\nbf$ such that $(\bsa_1\bsa_2^{-1})^\nbf \in \mathbb{C}$. Let $(G_n)$ be a LRS defined over the function field $K$ with power sum representation \eqref{eq:0}, where for $i=1,\ldots,r$ the coefficients of the polynomials $P_i$ and the characteristic roots $\alpha_i$ are contained in $L$. Moreover, let $\mu$ be a valuation of $L$. It now follows trivially that there is an effectively computable constant $C_3$ such that $\mu(G_n)\geq {C}_3+n\cdot\min_i\{\mu(\alpha_i)\}$. The first author and Heintze proved in \cite{FH} that \emph{if $(G_n)$ is non-degenerate} (i.e. $\alpha_i/\alpha_j\notin\mathbb{C}$ resp. $\alpha_i,\alpha_j$ are linearly independent modulo $\mathbb{C}$ for all $1\leq i<j\leq n$)\emph{, then there is an effectively computable constant $C_4$, independent of $n$, such that for every sufficiently large $n$ the inequality $\mu(G_n)\leq C_4+n\cdot\min_i\{\mu(\alpha_i)\}$ holds.}

The purpose of this paper is two-fold. Firstly, we give a quantitative version of the result on multi-recurrences. For doing so, we again use Evertse's quantitative version of the Subspace theorem (see Section \ref{sec3}). This gives an explicit upper bound for the maximal possible growth rate of a multi-recurrence. Secondly, we give a function field analogue on the growth of multi-recurrences. This solves an open question posed in \cite{FH1}. For the proof of this result we use, as usual, the Brownawell-Masser inequality as well as Zannier's function field analogue of the Subspace theorem (again see Section \ref{sec3}). Along the way, we prove a function field version of a result on multiplicative independence, which uses ideas going back to Loxton and Van der Poorten (see Lemma \ref{lemg}). The results over number fields will be given in Section \ref{sec:nf}, the results over function fields in Section \ref{sec:ff}. We collect some preliminaries in Section \ref{sec3} before we give the proofs in the subsequent sections.

\section{Growth of multi-recurrences over number fields}\label{sec:nf}

Let $K$ be a number field and let $(G(\nbf))$ be a multi-recurrence defined over $K$. Let $d$ be the degree of $K$ over $\mathbb{Q}$, $S$ the set containing all the prime ideals above the $\alpha_{ij}$'s and all the Archimedean places over $K$, and let $s$ the cardinality of $S$. 
We assume that $\alpha_{11}$ has maximum modulus with $\vert \alpha_{11}\vert >1$ among $\alpha_{ij}'s$ and moreover $$ \vert \alpha_{11}\vert = \max_{i,j, \delta}   \vert \delta (\alpha_{ij} )\vert_v$$ where the maximum is taken over all elements $\delta$ of ${\rm Gal}(K/ \mathbb{Q})$.  We denote by $q$ the least common multiple of all the denominators of coefficients of the polynomials $P_i$ for $i=1,\ldots,r$. Put $$ B=\max_{\sigma, i,j} \{ \vert \sigma(qa_{ij})\vert \},$$ where the maximum is taken over all elements $\sigma$ of ${\rm Gal}(K/ \mathbb{Q})$ and the coefficients $a_{ij}$ of $qP_i$ for $j=1,\ldots,m_i,i=1,\ldots,r$. Let $\riff$ be the total degree of the polynomial $P_i$ for $i=1,\ldots,r$. For $x \in \mathbb{R}\backslash\{0\}$, we define the function $$ T(x)= \dfrac{20rd\left( \max_i\{\riff\} + \log (2^{\max_i\{\riff\}+t} B) \right)}{xt \log \vert \alpha_{11} \vert} $$ and 
 $$\zetan (x) = \max \biggl\{ 10, 2T(x) \log T(x) \biggl\}. $$ Let $\Gamma = \{1, \ldots,r\}$ and $\G$ be the subset of $\mathbb{Z}^t$ consisting of \textbf{z} with $\alpha_l^{\textbf{z}} = \alpha_k^{\textbf{z}}$ for some $l,k \in \Gamma$ with $l \neq k$. We denote by $$A' = \sum_{i \in \Gamma}^{} \binom{t+\ri}{t} \mbox{ and } A= \max \{t, A'\}.$$
Notice that $A = \max\{t,r\}$ if all the polynomials $P_1, \ldots, P_r$ are constants, and $A=A'$ otherwise. We write $$\mathcal{S}_i = \{ \textbf{z} \in \mathbb{Z}^t: P_i(\textbf{z}) = 0\}$$ for $i=1,\ldots,r$.
The main result of this section is the following

\begin{thm} \label{thm2}
Suppose $\G = \{\textbf{0}\}$. Then, for $\epsilon >0$, the set $\mathcal{M}_{\epsilon}$ of solutions of the inequality 
\begin{equation} \label{eq:2mr}
 \vert G(\nbf)\vert < \vert P_1(\nbf) \bsa_1^{\nbf} \vert \cdot \vert \alpha_{11} \vert ^{-\vert \nbf \vert t \epsilon} 
 \end{equation} 
 is contained in $\mathcal{C} \cup \mathcal{S}_1 \cup \cdots \cup \mathcal{S}_r$, where $\mathcal{C}$ has cardinality
 $$ \vert \mathcal{C} \vert \leq (\zetan(\epsilon/(2d)) +1)^t +\frac{r!}{e}\cdot 2^{35A^3}d^{6A^2}\cdot 2\cdot \left( 2^{60r^2}\cdot \left(\frac{10}{22d}\epsilon\right)^{-7r}\right)^s.$$
 \end{thm}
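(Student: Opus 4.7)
The strategy is to convert \eqref{eq:2mr} into a bound on a near-vanishing polynomial-exponential sum and then apply Evertse's quantitative Subspace theorem. First I would substitute the power sum representation \eqref{eq:1} into \eqref{eq:2mr} and expand each $P_i(\nbf)$ as a sum of at most $\binom{t+\ri}{t}$ monomials; this rewrites $G(\nbf)$ as a sum of $A'$ terms of the shape $a_{i\mathbf{d}}\nbf^{\mathbf{d}}\bsa_i^{\nbf}$, so that \eqref{eq:2mr} becomes an upper bound on that sum relative to the single leading term $P_1(\nbf)\bsa_1^{\nbf}$. The assumption $\G=\{\mathbf{0}\}$ ensures that no two of the exponentials $\bsa_1^{\nbf},\ldots,\bsa_r^{\nbf}$ coincide on any nonzero $\nbf$, which is the non-degeneracy needed to extract useful information from the Subspace theorem.

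\textbf{Dichotomy on the size of $\nbf$.} The threshold $\zetan(\epsilon/(2d))$ is engineered to be the crossover at which the polynomial factors $\nbf^{\mathbf{d}}$ and the algebraic coefficients (whose size is controlled by $B$) become negligible compared with the saving $|\alpha_{11}|^{|\nbf|t\epsilon/2}$; this is where the $\log(2^{\max_i\{\riff\}+t}B)$ term and the degree parameters in $\zetan$ originate. All $\nbf$ with $|\nbf|\leq\zetan(\epsilon/(2d))$ are dumped directly into $\mathcal{C}$ and contribute the first summand $(\zetan(\epsilon/(2d))+1)^t$. For $\nbf$ with $|\nbf|>\zetan(\epsilon/(2d))$ lying outside $\mathcal{S}_1\cup\cdots\cup\mathcal{S}_r$, division by $P_1(\nbf)\bsa_1^{\nbf}$ turns the inequality into an $S$-unit-type relation on at most $A'$ generalized units with a genuine $\epsilon$-saving that is fed into the Subspace theorem.

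\textbf{Application of the quantitative Subspace theorem and reduction.} To these inequalities I attach $A'$ suitable linear forms indexed by the pairs $(i,\mathbf{d})$ and invoke Evertse's quantitative Subspace theorem, to be recalled in Section \ref{sec3}. Its explicit estimate on the number of exceptional subspaces produces the factor $2^{35A^3}d^{6A^2}\cdot\left(2^{60r^2}\cdot(10\epsilon/(22d))^{-7r}\right)^s$ in the bound. Each exceptional subspace gives a non-trivial vanishing relation $\sum_{(i,\mathbf{d})\in J}\lambda_{i\mathbf{d}}\nbf^{\mathbf{d}}\bsa_i^{\nbf}=0$ supported on a proper subset of the original terms; regrouping by the exponentials (legal because $\G=\{\mathbf{0}\}$) gives a shorter polynomial-exponential identity, and iterating this reduction through the non-empty subsets of $\{1,\ldots,r\}$ ultimately forces $\nbf$ into some $\mathcal{S}_i$, contrary to the case under consideration. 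The combinatorial factor $r!/e$ arises from counting the admissible partitions of $\{1,\ldots,r\}$ that appear along this recursion. The main obstacle I anticipate is the explicit constant bookkeeping: ensuring that $\zetan(\epsilon/(2d))$ absorbs the polynomial factors $\nbf^{\mathbf{d}}$ uniformly inside the Subspace-theorem estimates, and correctly propagating the dependencies on $A$, $r$, $s$, and $d$ through the inductive reduction so that they line up with the stated bound.
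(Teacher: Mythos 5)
Your high-level skeleton (split $|\nbf|$ at the threshold $\zetan(\epsilon/(2d))$, invoke Evertse's quantitative Subspace theorem, collect the partition count $r!/e$) is recognisable, but there are two substantive departures from the paper's proof that leave genuine gaps.

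First, the linear-form setup. You propose to expand each $P_i(\nbf)$ into monomials and attach $A'=\sum_i\binom{t+\ri}{t}$ linear forms indexed by pairs $(i,\mathbf{d})$. That would produce a Subspace-theorem constant of the shape $\bigl(2^{60(A')^2}\,\epsilon^{-7A'}\bigr)^s$, which does not match the stated bound, whose $2^{60r^2}$ and $\epsilon^{-7r}$ visibly scale in $r$, not $A'$. The paper instead works with the $r$-dimensional vector $\xbf_\nbf=(P_1(\nbf)\bsa_1^{\nbf},\ldots,P_r(\nbf)\bsa_r^{\nbf})$ and $r$ linear forms in $r$ variables (coordinate forms at the non-identity places, the full sum $x_1+\cdots+x_r$ replacing $L_{1v}$ at the identity place), keeping each $P_i(\nbf)\bsa_i^{\nbf}$ bundled as a single coordinate. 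That bundling is essential to reproduce the claimed constant.

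Second, you attribute the entire factor $2^{35A^3}d^{6A^2}\cdot\bigl(2^{60r^2}\cdot(10\epsilon/(22d))^{-7r}\bigr)^s$ to Evertse's Subspace theorem, and propose to close the argument by iterating the subspace reduction until $\nbf$ is forced into some $\mathcal{S}_i$. Neither of those matches the paper, and the iteration as stated would not terminate in the intended way. In the paper, the Subspace theorem only gives the factor $2\cdot\bigl(2^{60r^2}(10\epsilon/(22d))^{-7r}\bigr)^s$ (via Lemma~\ref{lem1}); each exceptional subspace yields a polynomial-exponential equation of the shape $\sum_{i\in I}c_iP_i(\nbf)\bsa_i^{\nbf}=0$, and it is the Schlickewei--Schmidt theorem on polynomial-exponential equations (\cite[Theorem 1]{SS}), applied under the hypothesis $\G=\{\mathbf{0}\}$ to the $\mathcal{P}$-non-degenerate systems attached to partitions of $I$, that supplies the factor $2^{35A^3}d^{6A^2}$. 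You never invoke \cite[Theorem 1]{SS}, and without it you have no mechanism to bound the number of solutions on a given subspace; this is the key missing ingredient in your proposal.
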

\begin{rmq}
\begin{itemize}
\item The index ``1" in the above theorem is not relevant in the sense that the quantity $\vert P_1(\nbf) \bsa_1^{\nbf} \vert$ can be replaced by any expression $\vert P_{i_0}(\nbf) \bsa_{i_0}^{\nbf} \vert$ with $i_0 \in \Gamma$.
\item If $K$ is a number field which does not contain the $\alpha_{ij}$'s, then Theorem \ref{thm2} still holds by replacing $\vert \alpha_{11}\vert$ by $\max_{i,j}\{ H(\alpha_{ij})\}$. (For the definition of the height function $H$ see Section \ref{sec3} below.)
 \end{itemize}
\end{rmq}
 \begin{rmq}
 If the polynomials $P_i$ are non-zero constants, then by replacing in the proof of Theorem \ref{thm2} Schmidt and Schlickewei's result (cf. \cite[Theorem 1]{SS}) by \cite[Theorem 1]{ESS}, it follows that, for $\epsilon >0$, the number of solutions of the inequality \eqref{eq:2mr} does not exceed $$(\zetan(\epsilon/(2d)) +1)^t + {\rm exp}((6(r-1))^{3(r-1)} (r(s-1)+1))\cdot \frac{2r!}{e}\cdot \left( 2^{60r^2}\cdot \left(\frac{10}{22d}\epsilon\right)^{-7r}\right)^s.$$ Hence we get a better bound than those of the theorem \ref{thm2}. 
 \end{rmq}

The proof of Theorem \ref{thm2} will be given in Section \ref{sec4}.

Let $\mathcal{A}$ consist of vectors \textbf{z} in $\mathbb{Z}^t$ such that for any subset $I \subseteq \{1, \ldots,r\}$ with $1 \in I$, we have $$\sum_{i \in I}^{} P_i(\textbf{z})\bsa_i^{\textbf{z}}\neq 0.$$ The corollary below gives us a quantitative version of \cite[Theorem 1]{FH1}and a generalization of \cite[Theorem 1]{SS}.

\begin{cor} \label{cor1}
Suppose $\G = \{\textbf{0}\}$. Then, for $\epsilon >0$, the number of elements of $\mathcal{A}$ solutions of the inequality \eqref{eq:2mr} does not exceed $$ 2^{r-1} \biggl( (\zetan(\epsilon / (2d)) +1)^t + \frac{r!}{e}2^{35A^3}d^{6A^2}\cdot 2 \cdot \left( 2^{60r^2}\cdot \left(\frac{10}{22d}\epsilon\right)^{-7r}\right)^s \biggl ).$$
\end{cor}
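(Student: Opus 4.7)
The plan is to reduce Corollary \ref{cor1} to Theorem \ref{thm2} by partitioning the $\mathcal{A}$-solutions of \eqref{eq:2mr} according to which of the polynomials $P_i$ vanish at $\nbf$. For each such $\nbf$ I would set $J(\nbf):=\{i\in\Gamma:P_i(\nbf)\neq 0\}$. Taking $I=\{1\}$ in the definition of $\mathcal{A}$ forces $P_1(\nbf)\neq 0$, hence $1\in J(\nbf)$, and so $J(\nbf)$ ranges over the collection $\mathcal{J}$ of subsets of $\{1,\ldots,r\}$ containing the index $1$, which has cardinality $2^{r-1}$. This accounts for the overall factor $2^{r-1}$ in the stated bound.

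For each fixed $J\in\mathcal{J}$ I would then consider the truncated multi-recurrence
$$G_J(\nbf):=\sum_{i\in J}P_i(\nbf)\bsa_i^{\nbf}.$$
On the fibre $\{\nbf : J(\nbf)=J\}$ one has $G(\nbf)=G_J(\nbf)$, so \eqref{eq:2mr} becomes the analogous inequality for $G_J$ (with the right-hand side unchanged, since $1\in J$). The hypotheses of Theorem \ref{thm2} for $G_J$ are straightforward to verify: the analogue $\G_J$ of $\G$ associated to the restricted index set is contained in $\G=\{\mathbf{0}\}$, and non-vanishing of every subsum of $G_J$ indexed by $I\subseteq J$ with $1\in I$ is inherited directly from membership in $\mathcal{A}$, so these $\nbf$ lie in the corresponding restricted set $\mathcal{A}_J$.

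Applying Theorem \ref{thm2} to $G_J$ then places the relevant $\nbf$ in $\mathcal{C}_J\cup\bigcup_{i\in J}\mathcal{S}_i$; however $P_i(\nbf)\neq 0$ for every $i\in J$ by construction, so the union of the $\mathcal{S}_i$ is avoided and $\nbf\in\mathcal{C}_J$. It remains to compare $|\mathcal{C}_J|$ with the full bound. The quantitative estimate in Theorem \ref{thm2} is monotone non-decreasing in the two parameters $r$ and $A$, at least in the small-$\epsilon$ regime of interest where $\left(\tfrac{10}{22d}\epsilon\right)^{-1}>1$; since $|J|\leq r$ and the associated value $A_J$ is at most $A$, the bound for $|\mathcal{C}_J|$ is majorised by the bound in Theorem \ref{thm2}. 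The term $(\zetan(\epsilon/(2d))+1)^t$ does not depend on $J$ and is simply repeated. Summing this uniform estimate over the $2^{r-1}$ elements of $\mathcal{J}$ yields the corollary.

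The only conceptual step is the initial partition $\nbf\mapsto J(\nbf)$; the monotonicity verification and the identification of each truncated problem with an instance of Theorem \ref{thm2} are routine bookkeeping and pose no serious obstacle.
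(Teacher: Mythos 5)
Your proposal is correct and follows essentially the same route as the paper: partition the $\mathcal{A}$-solutions by the support set $J(\nbf)=\{i:P_i(\nbf)\neq 0\}$ (which always contains $1$, giving the factor $2^{r-1}$), apply Theorem~\ref{thm2} to each truncated multi-recurrence $G_J$, observe that membership in $\mathcal{A}$ forces $\nbf\notin\mathcal{S}_i$ for $i\in J$ so that only the $\mathcal{C}_J$ part of the conclusion of Theorem~\ref{thm2} survives, and sum the uniform bound over the $2^{r-1}$ subsets. The paper's own proof is a slightly more compressed version of exactly this bookkeeping, with the monotonicity of the bound in $r$ and $A$ left implicit.
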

 
\begin{proof}
For any $\nbf \in \mathcal{A} $ there are at most $2^{r-1}$ multi-recurrences $(G(\nbf))$ with the properties $$G(\nbf)= \sum_{i \in I}^{} P_i(\nbf)\bsa_i^{\nbf}$$ for some subset $I \subseteq \{1, \ldots,r\}$ with $1 \in I $ and having no vanishing subsums, in particular $\nbf \notin \mathcal{S}_i$ for all $i \in I$. For each such multi-recurrence, it follows by Theorem \ref{thm2} that the number of elements of $\mathcal{A}$ solutions of the inequality 
\begin{equation} \label{eq:5}
 \vert G(\nbf)\vert < \vert P_1(\nbf) \bsa_1^{\nbf} \vert \cdot \vert \alpha_{11} \vert ^{-\vert \nbf \vert t \epsilon} 
\end{equation} 
does not exceed $$ (\zetan(\epsilon / (2d)) +1)^t + \frac{r!}{e}2^{35A^3}d^{6A^2}\cdot 2 \cdot \left( 2^{60r^2}\cdot \left(\frac{10}{22d}\epsilon\right)^{-7r}\right)^s.$$ Hence, putting all the cases together, we get the desired result.
\end{proof}
 
\section{Growth of multi-recurrence over function fields}\label{sec:ff}

Let $K$ be a function field in one variable over $\mathbb{C}$ and let $L$ be a finite extension of $K$. Let us recall the definitions of the discrete valuations on the field $\mathbb{C}(z)$ where $z$ is a transcendental element of $L$. For each element $a$ in $\mathbb{C}$, every non-zero element $Q$ of $\mathbb{C}(z)$ may be expanded as a formal Laurent series $$ { \displaystyle \sum_{n \geq m}} c_n (z-a)^n,$$ where $m\in\mathbb{Z},c_n \in \mathbb{C}$ for $n\geq m$ and $c_m \neq 0$. The valuation $\mu_a$ on $\mathbb{C}(z)$ is defined as $\mu_a(f)=m.$ Further, for $Q= f/g$ with $f, g \in \mathbb{C}[z],$ we put ${\deg}(Q) = {\deg}(f) - {\deg}(g)$, and $\mu_{\infty}(Q)= - {\deg}(Q)$. By definition $L$ is a finite extension $\mathbb{C}(z)$. Each valuation $\mu_a, \mu_{\infty}$ can be extended in at most $d=[L:\mathbb{C}(z)]$ ways to a discrete valuation on $L$ and we denote by $\mathcal{M}_L$ the set of such valuations. Notice that a valuation on $L$ is called finite if it extends $\mu_a$ for some $a \in \mathbb{C}$, and infinite if it extends $\mu_{\infty}$. Restricting the valuations in $\mathcal{M}_L$ to $K$, gives the discrete valuations $\mathcal{M}_K$ of $K$; again each valuations on $K$ extends to at most $[L:K]$ valuations on $L$.

We define the projective height of a non-zero vector $(x_1, \ldots, x_r)$ with coordinate in $L$ as usual by $$\mathcal{H}(x_1, \ldots,x_r) = -{ \displaystyle \sum_{\mu \in \mathcal{M}_L}} {\min}\{\mu(x_1), \ldots, \mu(x_r)\}.$$ For a single element $f \in L$, we define $$\mathcal{H}(f) = \mathcal{H}((1, f)) = -{ \displaystyle \sum_{\mu \in \mathcal{M}_L}} {\min}\{0, \mu(f)\}.$$ Our main theorem is the following

\begin{thm}\label{thm9}
Let $\bsa_i=(\alpha_{i1}, \ldots, \alpha_{it}) \in (L^*)^t$ with $i \in \{1, \ldots, r\}$ such that $\bsa_i,\bsa_j$ are linearly independent modulo $\mathbb{C}$ for each pair $(i,j)$ with $1 \leq i<j \leq r.$ Moreover, for every $i \in \{1, \ldots , r\}$ let $\pi_{i1}, \ldots, \pi_{ir_i}$ be $r_i$ linearly independent elements over $\mathbb{C}$. Then, for every vector $\nbf=(n_1, \ldots, n_t) \in \mathbb{Z}^t$ such that $$\{\pi_{il}\bsa_i^\nbf : l=1, \ldots, r_i, i=1, \ldots,r\}$$ is linearly dependent over $\mathbb{C}$, but no proper subset of this set is linearly dependent over $\mathbb{C}$, we have $$\vert \nbf \vert = {\max}_i \{\vert n_i \vert\} < C_5=C_5( \g, \pi_{il}, \bsa_i\mbox{ for }l\in\{1,\ldots,r_i\},i\in\{1,\ldots,r\}, \vert S\vert),$$ where $$C_5=(r+1)!\prod_{i=1}^r\mathcal{H}(\alpha_i) \left[ {\max}_{i,j,l,u} \left\{\mathcal{H}\left( \frac{\pi_{il}}{\pi_{ju}}\right)\right\} + \binom{{\displaystyle \sum_{i=1}^{r}} r_i-1}{2} (\vert S\vert + 2\g - 2) \right],$$ where
$$\alpha_l = \frac{\alpha_{il}}{\alpha_{jl}} \mbox{ for }l\in\{1,\ldots,t\},$$ $S$ is the finite set containing all the infinite places of $L$ and zeros and poles of $\alpha_{ip}$ and $\pi_{il}$ for $i=1, \ldots, r$; $p=1, \ldots, t$ and $l= 1, \ldots, r_i,$ and where the maximum is taken over all pairs $(i,j)$ with $1\leq i<j\leq r$ and all $l\in\{1,\ldots,r_i\},u\in\{1,\ldots,r_j\}$.
\end{thm}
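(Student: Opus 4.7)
The plan is to proceed in the standard function-field manner: recast the minimal linear dependence as a Brownawell-Masser $S$-unit equation, apply Brownawell-Masser to control the height of each summand, and then use Lemma \ref{lemg} (the function-field analogue of Loxton-Van der Poorten's multiplicative-independence estimate) to translate the resulting bound on $\mathcal{H}(\bsa_i^{\nbf}\bsa_j^{-\nbf})$ into an upper bound on $|\nbf|$.

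First I would read off from the hypothesis that there exist non-zero constants $c_{il}\in\mathbb{C}^*$ with
$$\sum_{i=1}^{r}\sum_{l=1}^{r_i} c_{il}\pi_{il}\bsa_i^{\nbf}=0$$
and no proper subsum vanishing. Fixing a reference term, say $c_{11}\pi_{11}\bsa_1^{\nbf}$, dividing through and relabelling produces an $S$-unit equation
$$\sum_{(i,l)\neq(1,1)} u_{il}=-1,\qquad u_{il}=\frac{c_{il}}{c_{11}}\cdot\frac{\pi_{il}}{\pi_{11}}\cdot\bsa_i^{\nbf}\bsa_1^{-\nbf},$$
in $N:=\bigl(\sum_i r_i\bigr)-1$ summands, with no vanishing proper subsum and whose terms are $S$-units for the set $S$ specified in the statement (note that $r\geq 2$ is forced, since for $r=1$ the relation would reduce to a $\mathbb{C}$-linear dependence among the $\pi_{1l}$, contradicting the hypothesis).

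Next I would invoke the Brownawell-Masser inequality for this unit equation to obtain the bound $\mathcal{H}(u_{il})\leq \binom{N}{2}(|S|+2\g-2)$ for every $(i,l)\neq (1,1)$. Picking any $i\neq 1$ and any admissible $l$, the triangle inequality for the projective height gives
$$\mathcal{H}\!\left(\bsa_i^{\nbf}\bsa_1^{-\nbf}\right)\leq \mathcal{H}(u_{il}) + \mathcal{H}\!\left(\frac{\pi_{il}}{\pi_{11}}\right),$$
which is bounded above by the bracketed expression appearing in the definition of $C_5$.

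Finally, since $\bsa_i$ and $\bsa_1$ are linearly independent modulo $\mathbb{C}$, Lemma \ref{lemg} produces a lower bound of the form $\mathcal{H}(\bsa_i^{\nbf}\bsa_1^{-\nbf})\,(r+1)!\prod_l \mathcal{H}(\alpha_l)\geq |\nbf|$, where $\alpha_l=\alpha_{il}/\alpha_{1l}$; combining this with the previous display yields the desired estimate $|\nbf|<C_5$. The main technical obstacle is precisely this last step: over number fields, absolute values and the product formula do the job directly, but in the function-field setting one needs a genuinely effective multiplicative-independence statement, which is why Lemma \ref{lemg} is developed in the paper beforehand.
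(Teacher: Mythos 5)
Your proposal follows essentially the same route as the paper: write the minimal dependence as a vanishing sum, invoke the Brownawell--Masser inequality (Proposition~\ref{lemf}) to bound the projective height of the terms, deduce a bound on $\mathcal{H}\bigl((\bsa_i\bsa_j^{-1})^{\nbf}\bigr)$, and then feed this into Lemma~\ref{lemg} to bound $|\nbf|$; the paper works directly with $\mathcal{H}(\Xi)$ rather than normalizing by the first term, but this is a cosmetic difference. One small caveat worth making explicit: Proposition~\ref{lemf} requires at least three summands, so the case $\sum_i r_i = 2$ (i.e.\ $r=2$, $r_1=r_2=1$) must be noted separately, which your normalization in fact covers trivially since then the unit equation reads $u_{21}=-1$ and gives $\mathcal{H}\bigl(\bsa_2^{\nbf}\bsa_1^{-\nbf}\bigr)\le\mathcal{H}(\pi_{11}/\pi_{21})$ directly.
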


Notice that our Theorem \ref{thm9} is a generalization of \cite[Theorem 2.1]{FP} and of \cite[Proposition 3]{FH}. Also notice that Theorem \ref{thm9} immediately implies and generalizes \cite[Theorem 1]{FH2}. As further consequence of this result, we get a function field analogue on the growth of multi-recurrences.

\begin{cor}\label{core}
Let $(G(\nbf))$ be a multi-recurrence defined over $K$ with power sum representation $$G(\nbf) = P_1(\nbf)\bsa_1^\nbf + P_2(\nbf)\bsa_2^\nbf + \cdots + P_r(\nbf)\bsa_r^\nbf,$$ where $\bsa_i= (\alpha_{i1}, \ldots, \alpha_{it}) \in (L^*)^t$ and all the coefficients of $P_i$ for $i=1,\ldots,r$ belong to $L$. Let $\mu$ be a valuation on $L$. Assume that $\bsa_i,\bsa_j$ are linearly independent modulo $\mathbb{C}$ for each pair $(i,j)$ with $1 \leq i<j \leq r.$ Then there are effectively computable constants $C_5, C_6$, independent of $n$, such that, for every integer vector $\nbf=(n_1, \ldots, n_t)$ with $P_1(\nbf)\cdots P_r(\nbf) \neq 0$, if $\vert \nbf \vert \geq C_5,$ then $$\mu(G(\nbf)) \leq C_6 + {\min}_i\{ n_1 \mu(\alpha_{i1}) +\cdots+ n_t \mu(\alpha_{it}) \}.$$
We may take $$C_5=(r+1)!\prod_{i=1}^r\mathcal{H}(\alpha_i) \left[ {\max}_{i,j,l,u} \left\{\mathcal{H}\left( \frac{\pi_{il}}{\pi_{ju}}\right)\right\} + \binom{{\displaystyle \sum_{i=1}^{r}} {\deg}P_i-1}{2} (\vert S\vert + 2\g - 2) \right]$$ and $$C_6 = {\max}_{i,l} \{\mu(\pi_{il})\}+ \binom{{\displaystyle \sum_{i=1}^{r}} {\deg}P_i-1}{2} (\vert S\vert + 2\g - 2),$$
where $S$ is a finite set of place of $L$ containing all zeros and poles of $\alpha_{ij}$ and $\pi_{il}$, $\mathbb{C}$-basis for the $\mathbb{C}$-space $V_i$ generated by the coefficients of $P_i$, as well as $\mu$ and the infinite places, and where the maximum is taken over all pairs $(i,j)$ with $1\leq i<j\leq r$ and $l\in\{1,\ldots,\dim V_i\},u\in\{1,\ldots,\dim V_j\}$.
\end{cor}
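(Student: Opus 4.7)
The plan is to derive Corollary \ref{core} from Theorem \ref{thm9} via a ``vanishing sum plus Brownawell--Masser'' argument standard in the function field setting. I begin by decomposing each $P_i$ in an intrinsic basis: let $V_i$ denote the $\mathbb{C}$-vector space spanned by the coefficients of $P_i$, fix a basis $\pi_{i1},\dots,\pi_{ir_i}$ of $V_i$ with $r_i=\dim V_i$, and write $P_i(\nbf)=\sum_{l=1}^{r_i}\pi_{il}\,p_{il}(\nbf)$ with $p_{il}\in\mathbb{C}[n_1,\dots,n_t]$. Substituting into the power sum representation yields
$$G(\nbf)=\sum_{i=1}^{r}\sum_{l=1}^{r_i}p_{il}(\nbf)\,\pi_{il}\,\bsa_i^{\nbf}.$$

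Next, the key observation is that the family $\{\pi_{il}\bsa_i^{\nbf}\}_{i,l}$ is linearly independent over $\mathbb{C}$ whenever $\vert\nbf\vert \ge C_5$. Indeed, a nontrivial $\mathbb{C}$-linear dependence among them would contain a minimal dependence; applying Theorem \ref{thm9} to this minimal subfamily (and noting that the constant produced is monotone in the index set, so it is dominated by the $C_5$ of the full family) would force $\vert\nbf\vert<C_5$, a contradiction. Since the hypothesis $P_1(\nbf)\cdots P_r(\nbf)\neq 0$ guarantees that for each $i$ at least one $p_{il}(\nbf)$ is nonzero, on the set $\Lambda(\nbf):=\{(i,l):p_{il}(\nbf)\neq 0\}$ the tautology
$$G(\nbf)-\sum_{(i,l)\in\Lambda(\nbf)}p_{il}(\nbf)\,\pi_{il}\,\bsa_i^{\nbf}=0$$
is a vanishing relation with no proper vanishing subsum: any such subsum would yield, after isolating the single $G(\nbf)$ summand, a nontrivial $\mathbb{C}$-linear dependence among the $\pi_{il}\bsa_i^{\nbf}$.

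Dividing by $\bsa_{i_0}^{\nbf}$, where $i_0$ minimises $\mu(\bsa_i^{\nbf})$, I would then apply the Brownawell--Masser inequality recalled in Section \ref{sec3}. The $p_{il}(\nbf)\in\mathbb{C}^*$ are units at every place, the ratios $\bsa_i^{\nbf}/\bsa_{i_0}^{\nbf}$ are $S$-units by the definition of $S$, and the $\pi_{il}$ contribute the term $\max_{i,l}\{\mu(\pi_{il})\}$ to $C_6$. The Brownawell--Masser estimate then gives
$$\mu\!\left(\frac{G(\nbf)}{\bsa_{i_0}^{\nbf}}\right)\le\max_{i,l}\{\mu(\pi_{il})\}+\binom{|\Lambda(\nbf)|-1}{2}\bigl(\vert S\vert+2\g-2\bigr),$$
and majorising $|\Lambda(\nbf)|$ by $\sum_i\deg P_i$ (as in the statement of the corollary) yields the announced inequality $\mu(G(\nbf))\le C_6+\min_i\{n_1\mu(\alpha_{i1})+\cdots+n_t\mu(\alpha_{it})\}$.

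The principal obstacle is the Brownawell--Masser step itself: since $G(\nbf)$ need not be an $S$-unit, one must invoke the version of the inequality that only demands $S$-unit ratios among the remaining summands and then carefully extract the bound on $\mu(G(\nbf)/\bsa_{i_0}^{\nbf})$ from the resulting projective-height estimate. A secondary technical point is the monotonicity of the constant in Theorem \ref{thm9} upon passing to subfamilies, which has to be verified directly from its explicit form.
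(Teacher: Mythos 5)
Your proposal follows the paper's structure closely: the decomposition of $P_i$ over a $\mathbb{C}$-basis $\{\pi_{il}\}$ of $V_i$, the use of Theorem \ref{thm9} to establish $\mathbb{C}$-linear independence of $\{\pi_{il}\bsa_i^{\nbf}\}$ whenever $\vert\nbf\vert\ge C_5$ (via the monotonicity of the bound over subfamilies, which you correctly flag as needing verification but which is indeed immediate from the explicit form of $C_5$), and the final application of a sum inequality to extract the bound on $\mu(G(\nbf))$. That skeleton is exactly the paper's.

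The one genuine gap is the tool you invoke at the end. Proposition \ref{lemf} (Brownawell--Masser) applies to a vanishing sum $u_1+\cdots+u_n=0$ where $S$ must contain all places at which \emph{any} $u_i$ fails to be a unit. Writing $G(\nbf)-\sum p_{il}(\nbf)\pi_{il}\bsa_i^{\nbf}=0$ and treating $G(\nbf)$ as one of the $u_i$'s forces $S$ to absorb the zeros and poles of $G(\nbf)$, which vary with $\nbf$ and are not uniformly controlled; so $\vert S\vert$ cannot be bounded independently of $\nbf$, and the resulting constant $C_6$ would not be the one claimed. You sense this (``the principal obstacle\dots since $G(\nbf)$ need not be an $S$-unit'') and gesture toward ``the version of the inequality that only demands $S$-unit ratios among the remaining summands,'' but you never identify it. That version is precisely Proposition \ref{lemh}, Zannier's function field analogue of the Subspace theorem, already recalled in Section \ref{sec3}: it takes $\mathbb{C}$-linearly independent $\rho_1,\ldots,\rho_n$ whose zeros and poles lie in $S$, sets $\delta=\sum_i\rho_i$, and bounds $\sum_{\nu\in S}(\nu(\delta)-\min_i\nu(\rho_i))$ with \emph{no} hypothesis on the places of $\delta$. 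Applying it with $\rho_{(i,l)}=p_{il}(\nbf)\pi_{il}\bsa_i^{\nbf}$ (the nonzero ones; the scalars $p_{il}(\nbf)\in\mathbb{C}^*$ do not affect valuations) and $\delta=G(\nbf)$, then singling out $\nu=\mu$ and dropping the remaining nonnegative terms, yields the stated $C_6$ directly. Note also that once you switch to Proposition \ref{lemh}, the ``no proper vanishing subsum'' discussion becomes superfluous: Zannier's result only asks for $\mathbb{C}$-linear independence of the $\rho_i$, which Theorem \ref{thm9} already supplies.
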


This result answers the open question in \cite{FH1} and is hence a generalization of function field analogue of the Loxton-Van der Poorten conjecture \cite{LV}. As further special case we get a quantitative version of \cite[Theorem 1]{FH}, which we record in the following corollary.

\begin{cor}\label{corf}
Let $(G_n)$ be a non degenerate linear recurrence sequence defined over $K$ with power sum representation $$G_n = P_1(n)\alpha_1^n + P_2(n)\alpha_2^n + \cdots + P_r(n)\alpha_r^n,$$ where for all $i=1,\ldots,r$ all $\alpha_i$ and all the coefficients of $P_i$ belong to $L=K(\alpha_1, \ldots, \alpha_r)$. Let $\mu$ be a valuation on $L$. Then there is an effectively computable constant $C_7$, independent of $n$ and the genus of $L$, and an effectively computable constant $C_8$, independent of $n$, such that, if $n \geq C_7,$ then $$\mu(G_n) \leq C_8 + n\cdot {\min}_i\{\mu(\alpha_{i})\}.$$
\end{cor}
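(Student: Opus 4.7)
The plan is to derive Corollary \ref{corf} as the specialization of Corollary \ref{core} to the single-parameter case $t=1$. In that specialization, a non-degenerate linear recurrence $(G_n)$ with power sum representation $G_n=\sum_{i=1}^r P_i(n)\alpha_i^n$ is literally a multi-recurrence, each $\bsa_i$ having the single component $\alpha_i$. The requirement that $\bsa_i,\bsa_j$ be linearly independent modulo $\mathbb{C}$ reduces to the condition $(\alpha_i/\alpha_j)^n\notin\mathbb{C}$ for every nonzero integer $n$, and since $\mathbb{C}$ is algebraically closed in $L$ this is equivalent to $\alpha_i/\alpha_j\notin\mathbb{C}$ for $i\neq j$, which is exactly the non-degeneracy hypothesis of Corollary \ref{corf}. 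The side condition $P_1(n)\cdots P_r(n)\neq 0$ appearing in Corollary \ref{core} is automatic for every $n$ larger than the maximum integer root of any $P_i$, a threshold depending only on the degrees of the polynomials.

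Applying Corollary \ref{core} in this setting yields, for all $n$ exceeding this trivial threshold and satisfying $n\geq C_5$, the inequality
$$\mu(G_n)\leq C_6 + n\cdot\min_i\{\mu(\alpha_i)\}.$$
I would then take $C_8:=C_6$, which is manifestly independent of $n$, and choose $C_7$ to be the maximum of $C_5$ and the largest integer root of any $P_i$; both constants are effectively computable from the data of the LRS.

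The main obstacle is the claim that $C_7$ can be chosen independent of the genus of $L$. The formula for $C_5$ supplied by Corollary \ref{core} contains the factor $|S|+2\g-2$ coming from the use of the Brownawell--Masser inequality in the proof of Theorem \ref{thm9}, so a direct reading would give a $\g$-dependent $C_7$. The plan is to re-examine the proof of Theorem \ref{thm9} in the $t=1$ case and observe that the genus enters only through an \emph{additive} constant in a height estimate for the vanishing $S$-unit equation obtained from a minimal dependency relation, i.e. an estimate whose $\g$-contribution does not multiply any quantity depending on $n$. Consequently, this contribution can be transferred to the additive constant $C_8$ on the right-hand side of the conclusion, leaving a threshold $C_7$ that depends only on the heights $\mathcal{H}(\alpha_i)$, the coefficient data of the $P_i$, and $r$. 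An alternative route, if the reorganisation above proves delicate, is to bound $\g$ itself in terms of the heights $\mathcal{H}(\alpha_i)$ via Riemann--Hurwitz applied to $L/\mathbb{C}(z)$; this again yields a genus-free expression for $C_7$, now at the expense of enlarging it by a factor controlled purely by heights.
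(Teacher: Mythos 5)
Your reduction of the main inequality to Corollary \ref{core} with $t=1$ is sound, and you correctly identify the real difficulty: the threshold $C_5$ produced by Theorem~\ref{thm9} contains the factor $\lvert S\rvert + 2\g - 2$ coming from Brownawell--Masser, so specializing blindly gives a $\g$-dependent $C_7$, contrary to what the corollary asserts. However, neither of your two suggested repairs actually closes this gap.

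Your first idea --- that the genus contribution ``enters only through an additive constant \ldots\ which does not multiply any quantity depending on $n$'' and so can be shifted into $C_8$ --- misreads what Theorem~\ref{thm9} produces. That theorem yields an upper bound $\lvert\nbf\rvert < C_5$ on the \emph{parameter} $n$ itself, not an additive correction to the value $\mu(G_n)$; the term $\binom{\sum r_i - 1}{2}(\lvert S\rvert+2\g-2)$ sits inside $C_5$ multiplied by $(r+1)!\prod_i\mathcal{H}(\alpha_i)$, i.e.\ it is part of the size of the threshold on $n$. There is no mechanism to move a bound on the admissible range of $n$ over to the additive constant $C_8$ in the conclusion: the two constants play logically different roles. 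Your second idea, invoking Riemann--Hurwitz to bound $\g$ by heights, is not carried out and would not give a genus-free $C_7$ in any honest sense --- one would merely be re-expressing $\g$ in terms of ramification data of $L$, which is exactly the quantity the statement insists must not appear, and in any case $\g_L \geq \g_K$ with $K$ part of the given data, so $\g$ is not controlled purely by $\mathcal{H}(\alpha_1),\dots,\mathcal{H}(\alpha_r)$.

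The paper avoids Theorem~\ref{thm9} entirely for this step. It introduces the global derivation on $L$ (via a primitive element $\beta$, with $\beta' = -\tfrac{\partial P}{\partial z}\big/\tfrac{\partial P}{\partial Y}$), writes $G_n = \sum_i\sum_l \pi_{il} q_{il}(n)\alpha_i^n$ with the $\pi_{il}$ a $\mathbb{C}$-basis, and forms the Wronskian $W(\Xi)$ of the set $\Xi=\{\pi_{il}\alpha_i^n\}$. Up to the nonvanishing factor $\prod_i(\alpha_i^n)^{r_i}$, this Wronskian equals a determinant $\Delta(n)$ where $\Delta(x)\in L[x]$ is a fixed nonzero polynomial built from the $\pi_{il}$ and the logarithmic derivatives $\alpha_i'/\alpha_i$. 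Linear dependence of $\Xi$ over $\mathbb{C}$ forces $W(\Xi)=0$, hence $\Delta(n)=0$, and so $n$ is bounded by an effectively computable constant $C_{12}$ depending only on the coefficients and degree of $\Delta$ --- with no genus term. Setting $C_7 = \max\{C_{11},C_{12}\}$ (where $C_{11}$ handles the trivial vanishing $P_i(n)=0$) and then applying Zannier's inequality (Proposition~\ref{lemh}) to the linearly independent set $\Xi'$ delivers the bound with $C_8$ carrying the $(\lvert S\rvert + 2\g-2)$ term. Without this Wronskian mechanism or a comparable genus-free criterion for the eventual $\mathbb{C}$-linear independence of $\Xi$, the assertion about $C_7$ remains unproven in your approach.
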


The proof of Theorem \ref{thm9} will be given in Section \ref{sec5} and the proofs of the corollaries in Section \ref{sec6} and \ref{sec7} respectively. 

\section{Preliminaries}\label{sec3}

In the section we collect some known results, which we will use in the proofs of our results. We divide the section according to preliminaries in the number field and the function field case respectively.

\subsection{Number fields: Evertse's quantitative Subspace theorem}
Let $K$ be a number field of degree $d$. Let $M_{K}$ be the collection of places of $K$. For $v \in M_K$, $x \in K$, we define $\vert x \vert_v$ as follows:
\begin{itemize}
\item $\vert x \vert_v = \vert \delta (x) \vert ^{1/d}$ if $v$ corresponds to the embedding $\delta: K \rightarrow \mathbb{R}$,\\
\item $\vert x \vert_v = \vert \delta (x) \vert^{2/d}$ if $v$ corresponds to the embedding $\delta: K \rightarrow \mathbb{C}$,\\
\item $\vert x \vert_v = (N(\mathfrak{P}))^{-{\rm ord}_{\mathfrak{P}}(x)/d}$ if $v$ corresponds to the prime ideal $\mathfrak{P}$ of $\mathcal{O}_K$ and ${\rm ord}_{\mathfrak{P}}(x)$ the exponent of $\mathfrak{P}$ in the decomposition of the ideal generated by $x$. 
 \end{itemize}
We call $v$ (real resp. complex) infinite if $v$ corresponds to an embedding (in $\mathbb{R}$ resp. $\mathbb{C}$). $v$ is called finite if $v$ corresponds to a prime ideal. The definitions are such that the product formula $$ \prod_{ v \in M_K}^{} \vert x \vert_v =1, \mbox{ for } x \in K^*=K \setminus \{0\}$$ holds. For a finite subset $S$ of cardinality $s$ containing the infinite $v$ of $M_K$, the ring of $S$-integers is defined as $$\mathcal{O}_S= \{x \in K: \vert x \vert_v \leq 1 \mbox{ for all } v\notin S\}$$ and its group of units is given by $$\mathcal{O}_S^*= \{x \in K: \vert x \vert_v = 1 \mbox{ for all } v\notin S\}.$$ For $v \in M_K$, the quantity $s(v)$ is given by
\begin{equation*}
s(v) = \left\{
\begin{aligned}1/d & \mbox{ if } v \mbox{ is real infinite},\\2/d & \mbox{ if } v \mbox{ is complex infinite},\\ 0 & \mbox{ if } v \mbox{ is finite}.
\end{aligned}
\right.
\end{equation*}
By the definition of $s(v)$, we get $$\sum_{v \in S}^{} s(v) = 1.$$ 
We define the absolute value of the vector $\xbf=(x_1, \ldots, x_m) \in K^m$ with $\xbf \neq 0$ by
 \begin{equation*}
\vert \xbf \vert_v = \left\{
\begin{aligned}
\left( \sum_{i=1}^{m} \vert x \vert_v^{2d} \right)^{s(v)/2} & \mbox{ if } v \mbox{ is real infinite}, \\
	 \left( \sum_{i=1}^{m} \vert x \vert_v^{d} \right)^{s(v)/2} & \mbox{ if } v \mbox{ is complex infinite},\\
\max \left\{ \vert x_1\vert_v, \ldots, \vert x_m \vert_v \right\} & \mbox{ if } v \mbox{ is finite}.
\end{aligned}
\right.
\end{equation*}

Now, the height of $\xbf$ is defined as follows: $$H(\xbf)= \prod_{ v \in M_K}^{} \vert \xbf \vert_v,$$ and by applying the product formula it follows that $H(\xbf)$ depends only on $\xbf$ and not on the choice of the number field $K$. The height has the following properties (see, e.g. (\cite[Lemma 2.1]{AN})):

\begin{lem}\label{leme2}
For $\eta,\gamma\in \KK^*$, we have
\begin{compactitem}
\item[a)] ${H}(\eta) \geq 1$ and ${H}(\eta) = {H}(\eta^{-1})$,
\item[b)] ${H}(\eta + \gamma) \leq 2{H}(\eta){H}(\gamma)$,
\item[c)] ${H}(\eta^n) ={H}(\eta)^{\vert n\vert},$ for any $n \in \mathbb{Z}$,
\item[d)] ${H}(\eta\gamma) \leq  H(\eta)H(\gamma)$.
\end{compactitem}
\end{lem}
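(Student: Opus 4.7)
The plan is to interpret $H(\eta)$ for a single algebraic number as the standard multiplicative Weil height, namely $H(\eta)=\prod_{v\in M_K}\max\{1,|\eta|_v\}$, which is the projective height of the point $(1,\eta)$ with the max-norm normalization at every place (this is the convention used in \cite{AN}, where the cited Lemma 2.1 is proven). Under this interpretation, all four claims reduce to elementary local estimates at each $v$, combined with the product formula $\prod_{v\in M_K}|\eta|_v=1$ recorded in Section~\ref{sec3}.

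First I would handle (a). The bound $H(\eta)\ge 1$ is immediate, as every local factor is $\ge 1$. For $H(\eta)=H(\eta^{-1})$ I would verify the local identity $\max\{1,|\eta|_v\}=|\eta|_v\cdot\max\{1,|\eta^{-1}|_v\}$, which one checks by splitting into the two cases $|\eta|_v\ge 1$ and $|\eta|_v<1$. Taking the product over all $v$ and invoking the product formula to eliminate $\prod_v|\eta|_v=1$ then yields the desired equality.

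Property (d) follows directly from the multiplicativity of each $|\cdot|_v$: since $|\eta\gamma|_v=|\eta|_v|\gamma|_v$, one has $\max\{1,|\eta\gamma|_v\}\le \max\{1,|\eta|_v\}\max\{1,|\gamma|_v\}$ at every place, and multiplying gives the claim. For (b), at a finite $v$ the ultrametric inequality produces $\max\{1,|\eta+\gamma|_v\}\le\max\{1,|\eta|_v\}\max\{1,|\gamma|_v\}$ with no extra constant, whereas at an infinite $v$ the archimedean triangle inequality together with the normalization $|x|_v^{1/s(v)}=|\delta(x)|$ yields $|\eta+\gamma|_v\le 2^{s(v)}\max\{|\eta|_v,|\gamma|_v\}$, hence the same bound multiplied by $2^{s(v)}$. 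Using $\sum_{v\in M_K}s(v)=1$ (the sum being effectively over the infinite places), the product over all places contributes exactly the overall factor $2$ appearing in the statement.

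Property (c) is the easiest: for $n\ge 0$ one has $\max\{1,|\eta|_v^n\}=\max\{1,|\eta|_v\}^n$ at each $v$, so the equality $H(\eta^n)=H(\eta)^n$ propagates through the product place by place, and the case $n<0$ follows at once by writing $\eta^n=(\eta^{-1})^{|n|}$ and invoking (a). The only point that requires any care is tracking the archimedean constants in (b) to confirm the factor $2$; everything else is routine bookkeeping with the product formula.
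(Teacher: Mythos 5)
Your proof is correct, and it is the standard argument for the standard (absolute) multiplicative Weil height; the paper does not actually prove this lemma but refers to \cite{AN}, so the right comparison is against that source, and your local-estimates-plus-product-formula argument is exactly what one finds there.

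One small point worth flagging explicitly, since you touch on it but slightly understate it: the height $H(\xbf)$ that the paper defines for vectors uses an $\ell^2$-type local norm at archimedean places, so if one were to read $H(\eta)$ literally as $H((1,\eta))$ with that normalization, parts (b) and (c) would not hold verbatim (e.g.\ $(1+|\eta|^{2n})^{1/2}\neq(1+|\eta|^2)^{n/2}$ in general, so $H(\eta^n)=H(\eta)^{|n|}$ would fail). Your decision to instead take the max-norm normalization $H(\eta)=\prod_v\max\{1,|\eta|_v\}$ is therefore not merely a matter of convention but is forced by the statement of the lemma; it would strengthen your write-up to say so. With that reading fixed, your case analysis in (a), the local inequality $\max\{1,|\eta+\gamma|_v\}\le 2^{s(v)}\max\{1,|\eta|_v\}\max\{1,|\gamma|_v\}$ at archimedean $v$ together with $\sum_v s(v)=1$ in (b), the submultiplicativity of $\max\{1,\cdot\}$ in (d), and the reduction of negative exponents to (a) in (c) are all routine and correct.
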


We shall require the following quantitative version of Schmidt's Subspace theorem due to Evertse \cite{E}.

\begin{thm}[Subspace theorem]\label{thm1}
Let $\{L_{1v}, \ldots, L_{rv}\}~(v \in S)$ be a linearly independent set of linear forms in $r$ variables with coefficients in $K$ such that $H(L_{iv}) \leq H$ for $i\in\{1,\ldots,r\},v\in S$. Let $0 <\epsilon < 1.$ Consider the inequality 
 \begin{equation} \label{eq:3}
\prod_{ v \in S}^{} \prod_{i=1}^{r} \dfrac{\vert L_{iv}(\xbf)\vert_v}{\vert \xbf \vert_v} < \prod_{ v \in S}^{} \vert {\rm det}(L_{1v}, \ldots, L_{rv}) \vert_v H(\xbf)^{-r-\epsilon} \mbox{ with } \xbf \in K^r.
 \end{equation}
 There are proper linear subspaces $T_1, \ldots, T_{t_1}$ of $K^r,$ with $$t_1 < \left( 2^{60r^2}\epsilon^{-7r}\right)^s$$ such that every solution $\xbf \in K^r$ of inequality \eqref{eq:3} with $H(\xbf) \geq H$ belongs to $$T_1 \cup \cdots \cup T_{t_1}.$$
 \end{thm}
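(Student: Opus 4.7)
Since Theorem~\ref{thm1} is quoted verbatim from Evertse \cite{E}, I would not reprove it from scratch; rather my plan is to sketch the strategy of Evertse's quantitative refinement of Schmidt's Subspace theorem, which is what produces the explicit shape of the bound $(2^{60r^2}\epsilon^{-7r})^s$. The overall approach is to partition the solution set of \eqref{eq:3} according to the approximate local sizes of the linear forms, control each class via the geometry of numbers in an adelic setting, and invoke a Roth/Faltings-type lemma to collapse the classes into few subspaces.

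First, I would attach to every solution $\xbf \in K^r$ a \emph{type} recording, for each $v \in S$ and each $i \in \{1,\ldots,r\}$, the quantity $c_{iv}$ approximating $\log(|L_{iv}(\xbf)|_v/|\xbf|_v)/\log H(\xbf)$ rounded to a grid of width of order $\epsilon/r$. Inequality \eqref{eq:3} combined with the product formula forces the vector $(c_{iv})_{i,v}$ to lie in a bounded simplex in $\mathbb{R}^{rs}$ satisfying a linear normalization, and a direct count of lattice points on the grid inside that simplex supplies the factor of order $\epsilon^{-7r}$ per place in $S$. The exponent $7r$ rather than $r$ or $rs$ comes from the subsequent need to carry orthogonality corrections through the successive minima step, and this is where the combinatorial factor $2^{60r^2}$ first enters.

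Fixing a type, I would consider the family of adelic parallelepipeds $\Pi_Q = \{\xbf \in \prod_{v \in S}K_v^r : |L_{iv}(\xbf)|_v \le Q^{c_{iv}}\}$ parametrized by a height scale $Q = Q_0^{\rho^k}$. Minkowski's theorem on successive minima, applied at each scale, produces a basis of $K^r$ of explicitly bounded height that is near-orthogonal with respect to the $L_{iv}$'s, reducing the problem to one of approximation by a bounded number of triangular systems. The key technical input is then a quantitative Roth-type lemma — in Evertse's formulation a sharp adelic version of Faltings' product theorem — which asserts that across a sufficiently long stretch of consecutive scales the corresponding bases cannot be ``too independent,'' so that the large-height solutions must all fall into a common proper subspace of $K^r$. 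Multiplying the number of admissible types by the number of scales that survive this argument yields the stated bound on $t_1$, while the finitely many small-height solutions with $H(\xbf) \le Q_0$ are absorbed into the hypothesis $H(\xbf)\ge H$.

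The main obstacle — and the reason to invoke \cite{E} as a black box rather than reprove it — is the explicit Roth/Faltings step: producing a polynomial-in-$\epsilon$ bound on the number of subspaces with a clean exponent in $r$ is technically the heart of Evertse's work and of every subsequent refinement (Schlickewei, Schmidt, Evertse–Ferretti). For our purposes it suffices to state Theorem~\ref{thm1} in the form above and use it as input to the proof of Theorem~\ref{thm2}; the verification to carry out in the present paper is only that the system of linear forms we construct there satisfies the hypotheses of Theorem~\ref{thm1} with the required height bound $H$ and parameter $\epsilon$.
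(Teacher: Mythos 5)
Theorem~\ref{thm1} is not proved in the paper at all: it is quoted from Evertse's Compositio Math.\ paper \cite{E} and used as a black box in the proof of Theorem~\ref{thm2}. Your proposal's bottom line — invoke \cite{E} rather than reprove it, and merely verify in the application that the constructed linear forms satisfy the hypotheses with the stated $H$ and $\epsilon$ — is exactly what the paper does; the accompanying narrative sketch of Evertse's type-decomposition, successive-minima, and Roth/Faltings machinery is impressionistic (the per-place $\epsilon^{-7r}$ bookkeeping in particular is not quite how Evertse organizes the count) but is not relied on, so the proposal is correct and takes the same approach.
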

 
 \subsection{Function fields}

Let $K$ be a function field in one variable over $\mathbb{C}$ and let $L$ be a finite extension of $K$. First, we notice that the height function defined above satisfies some basic properties that are listed in the next lemma (proven, e.g., in \cite{FKK}).

\begin{lem}\label{leme}
For $f, g \in L^*$, we have
\begin{compactitem}
\item[a)] $\mathcal{H}(f) \geq 0$ and $\mathcal{H}(f) = \mathcal{H}(f^{-1})$,
\item[b)] $\mathcal{H}(f) - \mathcal{H}(g) \leq \mathcal{H}(fg) \leq \mathcal{H}(f) + \mathcal{H}(g)$,
\item[c)] $\mathcal{H}(f^n) = \vert n \vert \mathcal{H}(f),$ for any $n \in \mathbb{Z}$,
\item[d)] $\mathcal{H}(f) = 0 \Longleftrightarrow f \in \mathbb{C}^*$.
\end{compactitem}
\end{lem}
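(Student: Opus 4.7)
The plan is to reduce every claim to the additivity property $\mu(fg) = \mu(f)+\mu(g)$ of the valuations in $\mathcal{M}_L$ together with the sum formula $\sum_{\mu \in \mathcal{M}_L} \mu(h) = 0$ for every $h \in L^*$, which encodes the fact that a rational function on the complete non-singular curve associated to $L$ has divisor of degree zero. As a preliminary observation, the sum formula combined with $\min\{0,a\} = a - \max\{0,a\}$ implies
$$\mathcal{H}(f) \;=\; -\sum_{\mu \in \mathcal{M}_L} \min\{0,\mu(f)\} \;=\; \sum_{\mu \in \mathcal{M}_L} \max\{0,\mu(f)\},$$
so $\mathcal{H}(f)$ measures simultaneously the total order of poles and the total order of zeros of $f$. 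This double description is the engine that drives the rest of the proof.

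Parts (a)--(c) are then formal manipulations with $\min$/$\max$. For (a), non-negativity is immediate from the definition, and $\mathcal{H}(f^{-1})=\mathcal{H}(f)$ follows by combining $\mu(f^{-1})=-\mu(f)$ with the preliminary observation above. For (b), I would use additivity of $\mu$ on products together with the pointwise inequality $\min\{0,a+b\}\geq \min\{0,a\}+\min\{0,b\}$; summing over $\mu\in\mathcal{M}_L$ and negating yields $\mathcal{H}(fg)\leq \mathcal{H}(f)+\mathcal{H}(g)$, and the lower bound drops out by applying this to $fg$ and $g^{-1}$ in place of $f$ and $g$ and invoking (a). For (c) with $n\geq 0$, we have $\min\{0,n\mu(f)\}=n\min\{0,\mu(f)\}$ termwise, so $\mathcal{H}(f^n)=n\mathcal{H}(f)$; the case $n<0$ follows from the case $-n>0$ applied to $f^{-1}$ together with (a).

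The only step that is not purely bookkeeping is the nontrivial direction of (d): if $\mu(f)\geq 0$ for every $\mu\in\mathcal{M}_L$, then $f\in\mathbb{C}$. This is the function-field analogue of Liouville's theorem, and I would invoke it directly from the standard theory of algebraic function fields in one variable, either as the statement that a rational function with no poles on a complete curve is constant, or as the consequence of Riemann--Roch that $\mathcal{L}(0)$ has dimension exactly $1$ over the constant field $\mathbb{C}$. The converse implication is immediate since every $\mu\in\mathcal{M}_L$ restricts to the trivial valuation on $\mathbb{C}$, so constants contribute zero to every term of the defining sum. I expect the whole lemma to be routine once the sum formula is in hand; the only subtlety worth flagging is that $\mathcal{H}$ is defined over the \emph{entire} set $\mathcal{M}_L$, which is precisely what makes the sum formula applicable in (a) and the Liouville-type argument valid in (d).
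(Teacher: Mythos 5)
Your proof is correct and takes the standard route: the key observation that the sum formula $\sum_{\mu\in\mathcal{M}_L}\mu(h)=0$ gives the dual descriptions $\mathcal{H}(f)=-\sum_\mu\min\{0,\mu(f)\}=\sum_\mu\max\{0,\mu(f)\}$, from which (a)--(c) follow by elementary $\min$/$\max$ manipulations and (d) by the fact that a function with no poles on a complete curve lies in the constant field $\mathbb{C}$. The paper itself does not supply a proof but cites \cite{FKK}, where the argument is essentially identical.
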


Let $S$ be a finite set of valuations of $L$ containing all infinite ones. Then $f \in L$ is called an $S$-unit if $\mu(f)=0$ for all $\mu \notin S$. Now, we state the following result due to Brownawell and Masser \cite{BM}, which is a generalization of a result due to Mason \cite{M}.

\begin{pro}\label{lemf}
Let $u_1, \ldots, u_n\in L^*~(n \geq 3)$ be such that $$u_1+u_2 + \cdots + u_n =0,$$ but no proper non-empty subset of the $u_i's$ is made up of elements linearly dependent over $\mathbb{C}$. Then $$\mathcal{H}(u_1, \ldots, u_n) \leq \dfrac{(n-1)(n-2)}{2} (\vert S\vert + 2\g -2),$$ where $S$ is the set of places of $L$, where $u_i$ is not unit, and $\g$ is the genus of $L$.
\end{pro}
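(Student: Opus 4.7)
The plan is to follow the classical Wronskian approach adapted to function fields, as in Brownawell and Masser's original argument. First, I would divide the relation $u_1 + \cdots + u_n = 0$ through by $u_n$, setting $f_i := u_i/u_n$ for $i = 1, \ldots, n-1$; by Lemma \ref{leme} we have $\mathcal{H}(u_1, \ldots, u_n) = \mathcal{H}(f_1, \ldots, f_{n-1}, 1)$, so it suffices to bound the latter. The relation becomes $f_1 + \cdots + f_{n-1} + 1 = 0$, and the hypothesis that no proper non-empty subset of the $u_i$'s is $\mathbb{C}$-linearly dependent translates into the stronger statement that every proper subset of $\{f_1, \ldots, f_{n-1}, 1\}$ is $\mathbb{C}$-linearly independent; in particular $\{f_1, \ldots, f_{n-1}\}$ is $\mathbb{C}$-linearly independent.

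Next, I would fix a non-trivial $\mathbb{C}$-linear derivation $D$ on $L$ (for instance extending $d/dz$ from $\mathbb{C}(z)$) and form the Wronskian $W := \det(D^{j-1} f_i)_{1 \leq i,j \leq n-1}$. Because the $f_i$ are $\mathbb{C}$-linearly independent, a standard criterion shows $W \not\equiv 0$. The key observation is that the quotient $W/(f_1 \cdots f_{n-1})$ can be written as a polynomial expression in the logarithmic derivatives $D^k(f_i)/f_i$, so its divisor is supported only on $S$ enlarged by the zero set of $dz$, a set whose contribution is controlled by $2\g - 2$. Hence outside this enlarged set $W$ has poles exactly at the poles of the $f_i$ and at least a corresponding simple zero wherever the $f_i$ vanish together.

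To finish, I would exploit the product formula $\deg(\mathrm{div}\, W) = 0$ to compare two expressions: at places outside $S$ one obtains a lower bound proportional to $\mathcal{H}(f_1, \ldots, f_{n-1}, 1)$, while at places in $S$ the Leibniz-type expansion of $W$ gives an upper bound of the form $\binom{n-1}{2}(\vert S\vert + 2\g - 2)$. The combinatorial factor $(n-1)(n-2)/2 = \binom{n-1}{2}$ comes from the orders of derivatives appearing in the Wronskian, namely $0 + 1 + \cdots + (n-2)$; the factor $\vert S\vert + 2\g - 2$ comes from the number of places in $S$ plus the degree of a canonical divisor on $L$ via the Riemann--Hurwitz formula.

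The main obstacle is the delicate bookkeeping at places in $S$ and at zeros/poles of the chosen derivation: one must show that all "derivation-induced" contributions are absorbed into the single factor $\vert S\vert + 2\g - 2$ without inflating the combinatorial prefactor $\binom{n-1}{2}$. This is the technical heart of the Brownawell--Masser argument and requires careful use of the Riemann--Hurwitz formula together with explicit identities for the logarithmic derivatives in the expansion of $W$.
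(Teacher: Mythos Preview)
The paper does not give its own proof of this proposition: it is stated in the preliminaries section as a known result due to Brownawell and Masser \cite{BM}, with no argument supplied beyond the citation. Your sketch follows exactly the original Wronskian approach of Brownawell and Masser, and the outline is correct --- the normalization by $u_n$, the nonvanishing of the Wronskian from $\mathbb{C}$-linear independence, the logarithmic-derivative expansion of $W/(f_1\cdots f_{n-1})$, and the appearance of $\binom{n-1}{2}$ from the total order of differentiation together with $\vert S\vert + 2\g - 2$ from the canonical divisor are all the right ingredients. Since the paper itself simply quotes the result, there is nothing further to compare; if you want to include a proof you would indeed have to carry out the bookkeeping you flag as the main obstacle, for which the original paper \cite{BM} (or Mason's monograph \cite{M}) is the appropriate reference.
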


Moreover, we will need the function field analogue of the Subspace theorem due to Zannier \cite{Z}.

\begin{pro}\label{lemh}
Let $L$ be a function field of genus $\g$. Let $\rho_1,\ldots, \rho_n \in L$ be linearly independent over $\mathbb{C}$, let $r \in \{0, \ldots, n\}$, and let $\mu$ a place of $L$. Let $S$ be a finite set of places of $L$ containing all the poles of $\rho_1,\ldots, \rho_n$ and all the zero of $\rho_1,\ldots, \rho_r.$ Put $$\delta = {\displaystyle \sum_{i=1}^{n}} \rho_i.$$ Then $${\displaystyle \sum_{\mu \in S}^{}}(\mu(\delta) - \min_i \{\mu(\rho_i)\}) \leq {\displaystyle \sum_{i=r+1}^{n}} \mathcal{H}(\rho_i) + \dfrac{(n-1)(n-2)}{2} (\vert S\vert + 2\g - 2).$$
\end{pro}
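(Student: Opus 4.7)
The plan is to apply the Brownawell--Masser inequality (Proposition~\ref{lemf}) to the given minimal $\mathbb{C}$-linear dependence, convert the resulting height bounds into estimates on the cross-block monomials $(\bsa_i/\bsa_j)^\nbf$, and then invoke the function-field multiplicative-independence statement (Lemma~\ref{lemg}) to extract a bound on $|\nbf|$.

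First, write the hypothesised minimal dependence as $\sum_{i,l} c_{il}\pi_{il}\bsa_i^\nbf = 0$. Minimality forces every $c_{il} \in \mathbb{C}^*$, since any vanishing coefficient would immediately exhibit a proper $\mathbb{C}$-linearly dependent subset of $\{\pi_{il}\bsa_i^\nbf\}$. Set $u_{il} = c_{il}\pi_{il}\bsa_i^\nbf$ and $N = \sum_i r_i$. The set $S$ in the statement contains all zeros and poles of the $\alpha_{ip}$ and $\pi_{il}$, so each $u_{il}$ is an $S$-unit up to a nonzero scalar; the hypothesis of Proposition~\ref{lemf} transfers verbatim, and for $N \geq 3$ we obtain
\[
\mathcal{H}(u_{11},\ldots,u_{rr_r}) \leq \binom{N-1}{2}\bigl(|S|+2\g-2\bigr).
\]
The degenerate case $N=2$ is handled directly: a two-term minimal relation must place the two surviving indices in distinct blocks $i_1 \neq i_2$ (otherwise the $\mathbb{C}$-linear independence of $\pi_{i,1},\ldots,\pi_{i,r_i}$ is contradicted), and one simply reads off $(\bsa_{i_1}/\bsa_{i_2})^\nbf$ as a ratio of $\pi$'s from the relation.

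Next, pass to pairwise projective ratios. Because the projective height of any sub-pair is at most that of the full tuple, $\mathcal{H}(u_{i_1 l_1}/u_{i_2 l_2}) \leq \binom{N-1}{2}(|S|+2\g-2)$, and since scalars $c_{il} \in \mathbb{C}^*$ have height zero (Lemma~\ref{leme}(d)), the triangle inequality (Lemma~\ref{leme}(b)) yields, for any distinct blocks $i_1 \neq i_2$ and any choice of $l_1, l_2$,
\[
\mathcal{H}\bigl((\bsa_{i_1}/\bsa_{i_2})^\nbf\bigr) \leq \mathcal{H}\bigl(\pi_{i_2 l_2}/\pi_{i_1 l_1}\bigr) + \binom{N-1}{2}\bigl(|S|+2\g-2\bigr).
\]
Thus every cross-block monomial $(\bsa_{i_1}/\bsa_{i_2})^\nbf = \prod_{p=1}^{t}(\alpha_{i_1 p}/\alpha_{i_2 p})^{n_p}$ has projective height bounded by quantities independent of $\nbf$.

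Finally, feed these bounds into Lemma~\ref{lemg}. The hypothesis that $\bsa_i, \bsa_j$ are linearly independent modulo~$\mathbb{C}$ for every $i<j$ is exactly the input for a Loxton--Van der Poorten style multiplicative-independence statement that converts an upper bound on $\mathcal{H}\bigl((\bsa_i/\bsa_j)^\nbf\bigr)$ into an upper bound on $|\nbf|$. Running this across the $\binom{r}{2}$ pairs (or, more efficiently, iterating along a chain $\bsa_1, \bsa_2, \ldots, \bsa_r$) should produce the outer factors $(r+1)!\prod_{i=1}^r \mathcal{H}(\alpha_i)$ in $C_5$, while the bracketed term is already visible from the combination of Step~2 with the $\max$ over $\pi$-ratios. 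The main obstacle is matching this exact shape: the product $\prod_{i=1}^r \mathcal{H}(\alpha_i)$ strongly suggests an inductive unpacking of multiplicative independence across all $r$ vectors rather than a single application, and the combinatorial factor $(r+1)!$ requires careful book-keeping of how many index-pair choices are consumed at each successive reduction inside Lemma~\ref{lemg}.
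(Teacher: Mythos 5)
Your proposal does not prove Proposition~\ref{lemh} at all. Proposition~\ref{lemh} is Zannier's function-field analogue of the Subspace theorem, cited verbatim from \cite{Z}; its content is a bound on $\sum_{\mu\in S}\bigl(\mu(\delta)-\min_i\mu(\rho_i)\bigr)$ for a sum $\delta=\sum_i\rho_i$ of $\mathbb{C}$-linearly independent functions $\rho_1,\ldots,\rho_n$, and the paper supplies no proof of it beyond the citation. What you have written instead is a sketch of the paper's proof of Theorem~\ref{thm9}: you take a minimal $\mathbb{C}$-linear dependence among $\{\pi_{il}\bsa_i^{\nbf}\}$, apply Brownawell--Masser (Proposition~\ref{lemf}), pass to cross-block ratios, and feed the resulting height bounds into Lemma~\ref{lemg} to bound $|\nbf|$. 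That argument targets the wrong conclusion: it produces a bound on the multi-recurrence index $\nbf$, not the valuation-sum inequality of Proposition~\ref{lemh}.

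The two results are related only insofar as both sit in the Brownawell--Masser circle of ideas, but neither implies the other, and no rearrangement of your steps would produce the stated inequality: nothing in your argument involves the sum over $\mu\in S$ of $\mu(\delta)-\min_i\mu(\rho_i)$, nor the asymmetry between the indices $i\le r$ (where zeros are also in $S$) and $i>r$ (where only poles are, which is what generates the extra term $\sum_{i>r}\mathcal{H}(\rho_i)$). A proof of Proposition~\ref{lemh} would have to follow Zannier's divisor-theoretic/Wronskian argument from \cite{Z}, or reduce to Proposition~\ref{lemf} by first stripping off the non-$S$-unit parts of $\rho_{r+1},\ldots,\rho_n$ and tracking how that affects the local valuations; either way this is an entirely different line of reasoning from the one you attempted.
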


\section{Proof of Theorem \ref{thm2}}\label{sec4}

 If $r=1$, then Theorem \ref{thm2} follows easily. Thus, we assume $r>1$. We denote by $id$ the embedding over $K$ corresponding to the identity. For $v \in S \setminus \{id\}$, we define $r$ linear forms $L_{1v}, \ldots, L_{rv}$ in $r$ variables $\xbf=(x_1, \ldots, x_r)$ as follows: $L_{iv}(\xbf)=x_i$ for $i=1,\ldots,r$. For $v=id,$ we define $L_{1v} (\xbf) = x_1+\cdots + x_r$ and $L_{iv}(\xbf)=x_i$ for $i=2,\ldots,r.$ We denote by $q$ the least common multiple of all the denominators of coefficients of polynomials $P_i's$ and put $$\mathcal{N} := \{(qP_1(\nbf)\bsa_1^{\nbf}, \ldots, qP_r(\nbf)\bsa_r^{\nbf}) : n \in \mathbb{N}\}.$$ Put $\xbf_{\nbf} = (qP_1(\nbf)\bsa_1^{\nbf}, \ldots, qP_r(\nbf)\bsa_r^{\nbf})$ and, without loss of generality, we assume $q=1$ since, the case $q \neq 1$ follows easily by replacing the sequence $G(\nbf)$ by $qG(\nbf)$. We prove the following result, which is the main ingredient in the proof of our theorem \ref{thm2}.

\begin{lem} \label{lem1}
For each $\epsilon >0$, the set of $\xbf_{\nbf} \in \mathcal{N} $ with $\vert \nbf \vert > \zetan(\epsilon)$ satisfying the inequality 
\begin{equation} \label{eq:7}
\prod_{ v \in S}^{} \prod_{i=1}^{r} \vert L_{iv}(\xbf_{\nbf})\vert_v < \vert \alpha_{11} \vert ^{-\vert \nbf \vert t \epsilon} 
\end{equation} 
is contained in $C_9$ many proper subspaces in $K^r$, which does not exceed $$ 2\cdot \left( 2^{60r^2}\cdot \left(\frac{10}{11}\epsilon\right)^{-7r}\right)^s.$$
\end{lem}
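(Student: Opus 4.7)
The plan is to reduce the hypothesis to the hypothesis of Evertse's quantitative Subspace theorem (Theorem \ref{thm1}) applied to the family of linear form systems $\{L_{1v},\ldots,L_{rv}\}_{v\in S}$ and the points $\xbf_{\nbf}\in K^r$. First, I would observe that at each place $v\in S$ the matrix of coefficients of $L_{1v},\ldots,L_{rv}$ is either the identity (for $v\ne id$) or unit upper-triangular (for $v=id$); consequently the forms are linearly independent, $\det(L_{1v},\ldots,L_{rv})=1$ at every $v\in S$, and the $H$ of Theorem \ref{thm1} can be taken to be an absolute constant since all coefficients lie in $\{0,1\}$.

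Next, I would establish an upper bound of the shape
\[
H(\xbf_{\nbf})\le 2^{\max_i m_i+t}\,B\cdot \vert\nbf\vert^{\max_i m_i}\cdot \vert\alpha_{11}\vert^{\vert\nbf\vert t}.
\]
For places $v\notin S$ one uses that the $\alpha_{ij}$'s are $S$-units by construction of $S$, so $\vert\bsa_i^{\nbf}\vert_v=1$; at the Archimedean places one applies the triangle inequality together with the definition of $B$ and the Galois-invariant choice of $\vert\alpha_{11}\vert$ as the overall maximum modulus. Dividing the hypothesis by $\prod_{v\in S}\vert\xbf_{\nbf}\vert_v^{\,r}=H(\xbf_{\nbf})^r$ puts the assumed inequality into the Subspace-theorem form
\[
\prod_{v\in S}\prod_{i=1}^{r}\frac{\vert L_{iv}(\xbf_{\nbf})\vert_v}{\vert\xbf_{\nbf}\vert_v}<\prod_{v\in S}\vert\det(L_{1v},\ldots,L_{rv})\vert_v\cdot H(\xbf_{\nbf})^{-r}\cdot \vert\alpha_{11}\vert^{-\vert\nbf\vert t\epsilon}.
\]
The crucial point is that the threshold $\zetan(\epsilon)=\max\{10,2T(\epsilon)\log T(\epsilon)\}$, with $T(\epsilon)$ carrying the factor $20rd/(\epsilon\,t\log\vert\alpha_{11}\vert)$ and the $\max_i m_i+\log(2^{\max_i m_i+t}B)$ term, is tailored so that for every $\vert\nbf\vert>\zetan(\epsilon)$ the upper bound on $H(\xbf_{\nbf})$ yields
\[
\vert\alpha_{11}\vert^{-\vert\nbf\vert t\epsilon}\le H(\xbf_{\nbf})^{-\tfrac{10}{11}\epsilon},
\]
i.e.\ the lower-order contribution $\max_i m_i\cdot\log\vert\nbf\vert+\log(2^{\max_i m_i+t}B)$ is absorbed into at most a $\tfrac{1}{10}$-fraction of $\vert\nbf\vert t\log\vert\alpha_{11}\vert$.

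Once this is done, Theorem \ref{thm1} applied with parameter $\epsilon'=\tfrac{10}{11}\epsilon$ and all determinants equal to $1$ produces at most $\bigl(2^{60r^2}(10\epsilon/11)^{-7r}\bigr)^s$ proper subspaces of $K^r$ containing all the non-degenerate $\xbf_{\nbf}$ under consideration (the constraint $H(\xbf_{\nbf})\ge H$ is automatic once $\vert\nbf\vert>\zetan(\epsilon)\ge 10$, since then $H(\xbf_{\nbf})\ge\vert\alpha_{11}\vert^{10t}$ dominates the absolute constant $H$). The remaining factor $2$ in the stated bound accounts for a handful of additional subspaces covering the degenerate $\xbf_{\nbf}$ where some coordinate $P_i(\nbf)\bsa_i^{\nbf}$ or the sum $G(\nbf)=L_{1,id}(\xbf_{\nbf})$ vanishes: those $\xbf_{\nbf}$ lie inside the coordinate hyperplanes $x_i=0$ or the hyperplane $x_1+\cdots+x_r=0$, a count easily absorbed into the doubling since the main term is of size $2^{60r^2}$.

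The principal obstacle is the precise passage from $\vert\nbf\vert>\zetan(\epsilon)$ to the inequality $\log H(\xbf_{\nbf})\le\tfrac{11}{10}\vert\nbf\vert t\log\vert\alpha_{11}\vert$; this is a routine but delicate monotonicity computation where the choice $\zetan=2T\log T$, together with the factor $20$ inside $T(\epsilon)$, is engineered to leave precisely the $\tfrac{1}{10}$-slack needed for the $(10/11)$-factor that feeds into Evertse's bound and ultimately produces the stated exponent $-7r$ on $\tfrac{10}{11}\epsilon$.
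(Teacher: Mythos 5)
Your proposal is correct and follows essentially the same route as the paper: bound $H(\xbf_{\nbf})$ by $\vert\alpha_{11}\vert^{11t\vert\nbf\vert/10}$ for $\vert\nbf\vert>\zetan(\epsilon)$, rewrite \eqref{eq:7} in the Subspace-theorem shape with parameter $\epsilon'=\tfrac{10}{11}\epsilon$ and unit determinants, invoke Theorem \ref{thm1}, and absorb the finitely many hyperplanes $L_{iv}(\xbf)=0$ covering the degenerate $\xbf_{\nbf}$ into the factor $2$. (One minor slip worth noting: $\prod_{v\in S}\vert\xbf_{\nbf}\vert_v^{\,r}$ is $\ge H(\xbf_{\nbf})^r$ rather than equal to it, since $\vert\xbf_{\nbf}\vert_v\le 1$ for $v\notin S$ because $\xbf_{\nbf}\in\mathcal{O}_S^r$; the inequality runs in the direction the argument needs, exactly as in the paper.)
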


\begin{proof}
Assume $\vert \nbf\vert > \zetan(\epsilon),$ and $\xbf_\nbf$ as in the lemma satisfying inequality \eqref{eq:7} such that $L_{iv}(\xbf_n) \neq 0$ for $v \in S, i=1, \ldots,r.$ It is clear that the linear forms $\{L_{1v}, \ldots, L_{rv}\}$ are linearly independent over $K$, and for all $v \in S$ it holds $\vert {\rm det}(L_{1v},$ $\ldots, L_{rv}) \vert_v =1.$ Since the coefficients of the polynomials $P_i$ are algebraic integers and $\alpha_{il} \in \mathcal{O}_S^*$, it follows that $\xbf_\nbf\in\mathcal{O}_S^r$. Thus, $\vert \xbf_{\nbf}\vert_v \leq 1$ for all $v \notin S$ and therefore
\begin{equation} \label{eq:3d}
\dfrac{1}{ \prod_{ v \notin S}^{}\vert \xbf_{\nbf}\vert_v} \geq 1.
\end{equation} 
By using (\cite[Lemma 2.4]{AN}) and the fact that $\vert \nbf\vert > \tau(\epsilon)$, we obtain $$ H(\xbf_\nbf) < \vert \alpha_{11}\vert^{\frac{11t\vert \nbf \vert}{10}}.$$ 
 Namely,  $\vert \xbf_{\nbf}\vert_v \leq 1$ for all $v \notin M_{\infty}$, therefore $ H(\xbf_\nbf) \leq  \prod_{ v \in M_K}^{} \vert \xbf_{\nbf}\vert_v$. Now we fix $v \in M_{\infty}$. By using the fact that $\vert \cdot \vert_v^{1/s(v)}$ is the usual absolute value, we get
 $$
\begin{array}{lcl}
\vert \xbf_{\nbf}\vert_v    &= &  \vert (P_1(\nbf)\bsa_1^{\nbf}, \ldots, P_r(\nbf)\bsa_r^{\nbf}) \vert_v \\\\
   
                                    &\leq& r \max_i   \vert P_i(\nbf)\bsa_i^{\nbf} \vert_v \\\\
                
                                   &\leq& \left(r \dfrac{\max_i   \vert P_i(\nbf) \vert_v }{\vert \alpha_{11} \vert ^{nts(v)/10}}\right) \cdot   (\vert \alpha_{11} \vert )^{11nts(v)/10} \\\\         
                                   
                                    &\leq&  (\vert \alpha_{11} \vert )^{11nts(v)/10},
 \end{array} 
 $$
  where, for the last inequality, we used the fact that $\vert \alpha_{11} \vert = \max_{i,j, \delta}   \vert \delta (\alpha_{ij} )\vert_v$ and $n> \tau(\epsilon)$ and (\cite[Lemma 2.4]{AN}). Since $$\sum_{v \in M_{\infty}}^{} s(v) = 1,$$ it follows that  $H(\xbf_\nbf) \leq (\vert \alpha_{11} \vert )^{11nt/10}.$
By combining this inequality, relation \eqref{eq:3d} and the fact that $\xbf_n$ is a solution of the inequality \eqref{eq:7}, we deduce
$$\prod_{ v \in S}^{} \prod_{i=1}^{r} \vert L_{iv}(\xbf_{\nbf})\vert_v < \left( \dfrac{1}{H(\xbf_n)} \right)^{\epsilon'} \left( \dfrac{1}{ \prod_{ v \notin S}^{}\vert \xbf_{\nbf}\vert_v}\right)^r,$$ where $\epsilon' = \frac{10}{11}\epsilon$. Hence, by applying Theorem \ref{thm1} with $H=1,$ it follows that $\xbf_n$ belongs to one of finitely many proper linear subspaces $T_l$ with $$l\leq t_1 \leq \left( 2^{60r^2}\cdot \left(\frac{10}{11}\epsilon\right)^{-7r}\right)^s.$$ So, the set of $\xbf_{\nbf} \in \mathcal{N} $ with $\vert \nbf \vert > \zetan(\epsilon)$ satisfying inequality \eqref{eq:7} is contained in $C_9$ many proper subspaces in $K^r$ with $$C_9  \leq sr + \left( 2^{60r^2}\cdot \left(\frac{10}{11}\epsilon\right)^{-7r}\right)^s < 2\cdot \left( 2^{60r^2}\cdot \left(\frac{10}{11}\epsilon\right)^{-7r}\right)^s,$$ which gives us the desired result.
\end{proof}

Let $\nbf$ be a solution of \eqref{eq:2} with $ \vert \nbf \vert > \zetan(\epsilon/(2d))$ and $\xbf_{\nbf} \in \mathcal{N}$. We want to show that $\xbf_{\nbf}$ is a solution of inequality \eqref{eq:7}. Fixing $v \in S$ and $i \in \Gamma$, we have 
\begin{equation} \label{eq:10}
\vert P_i(\nbf)\vert_v \leq ((2^{\riff+t})\vert \nbf \vert ^{\riff})^{s(v)}B^{s(v)} \leq ((2^{\max_i\{\riff\}+t})\vert \nbf \vert ^{\max_i\{\riff\}})^{s(v)}B^{s(v)}.
 \end{equation} 
By the relation \eqref{eq:10}, the product formula, and the fact that $\alpha_{il} \in \mathcal{O}_S^*$, we obtain 
\begin{equation} \label{eq:4}
\prod_{ v \in S}^{} \prod_{i=1}^{r} \vert P_i(\nbf) \bsa_i^{\nbf} \vert_v \leq \left( (2^{\max_i\{\riff\}+t})\vert \nbf \vert ^{\max_i\{\riff\}} B\right)^r.
 \end{equation}
 Hence,
\begin{equation*}\begin{split}
\prod_{ v \in S}^{} \prod_{i=1}^{r} \vert L_{iv}(\xbf_{\nbf})\vert_v
&= \dfrac{\vert G({\nbf}) \vert_{id}}{ \vert P_1(\nbf)\bsa_1^{\nbf} \vert_{id}} \cdot \prod_{ v \in S}^{} \prod_{i=1}^{r} \vert P_i(\nbf) \bsa_i^{\nbf} \vert_v \\
&\leq \left( (2^{\max_i\{\riff\}+t})\vert \nbf \vert ^{\max_i\{\riff\}} B\right)^r \cdot \vert \alpha_{11} \vert^{-\vert \nbf \vert s(id) t \epsilon}\\
&\leq \vert\alpha_{11} \vert^{-\vert \nbf \vert t \epsilon / (2d)},
\end{split}
\end{equation*}
where for the first inequality, we used \eqref{eq:4} and the fact that $\nbf$ is solution of inequality \eqref{eq:2mr}, and for the last one, \cite[Lemma 2.4]{AN} and the fact that $\nbf > \zetan(\epsilon /(2d))$. Applying Lemma \ref{lem1} with $\epsilon$ replaced by $\epsilon /2d$ we deduce that there are at most $$2\cdot \left( 2^{60r^2}\cdot \left(\frac{10}{22d}\epsilon\right)^{-7r}\right)^s$$ polynomial-exponential Diophantine equations of the shape 
\begin{equation} \label{eq:11}
\sum_{i \in I}^{} c_iP_i(\nbf)\bsa_i^{\nbf}=0
\end{equation}
with $(c_1, \ldots, c_r) \in K^r / \{\textbf{0}\}$ and $I \subseteq \Gamma$ such that $\nbf$ is solution of at least one of them. Let us consider a polynomial-exponential equation of the shape \eqref{eq:11}. Let $\mathcal{P}$ be a partition of the set $I$. The set $\lambda \subseteq I$ occurring in the partition $\mathcal{P}$ will be considered elements of $\mathcal{P}$. Given $\mathcal{P}$, consider the system of equations
\begin{equation} \label{eq:12}
\sum_{i \in \lambda}^{} c_iP_i(\nbf)\bsa_i^{\nbf}=0~(\lambda \in \mathcal{P}).
\end{equation} 
A solution $\nbf$ of \eqref{eq:12} is called $\mathcal{P}$-degenerate if a subsum of one of the equations of the system vanishes. Otherwise, we will say $\nbf$ is $\mathcal{P}$-non-degenerate. Let $M(\mathcal{P})$ be the set consisting of $\mathcal{P}$-non-degenerate solutions of the system \eqref{eq:12}. It is clear that every solution of \eqref{eq:11} lies in $M(\mathcal{P})$ for some partition $\mathcal{P}$. By \cite[Lemma 2.5]{L}, the set of partitions of $I$ which only contain subsets with at least two elements has a cardinality, which does not exceed $r! / e.$ Hence, by using the fact that $\G = \{\textbf{0}\}$ and \cite[Theorem 1]{SS}, it follows that the number of solutions of equation \eqref{eq:11} does not exceed $$\frac{r!}{e} 2^{35A^3}d^{6A^2}.$$ By combining this bound with the upper bound above on the number of polynomial-exponential Diophantine equations of the shape \eqref{eq:11}, we get the desired result.
\hfill$\square$

\section{Proof of Theorem \ref{thm9}}\label{sec5}

For the proof of the theorem, we will need the following lemma, which is a function field analogue over a function field of a result on multiplicative dependence over number field.

\begin{lem}\label{lemg}
Let $L$ be a function field in one variable defined over $\mathbb{C}$, and let $\alpha_1, \ldots, \alpha_r$ be non-zero element of $L$ which are multiplicative independent modulo $\mathbb{C}$, and let $\alpha_0 \in L$. If there are non-zero integers $k_1, \ldots, k_r$ such that $$\alpha_1^{k_1}\alpha_2^{k_2} \ldots \alpha_r^{k_r} = \alpha_0,$$ then 
$$\vert k_i \vert \leq 
\dfrac{(r+1)!}{\mathcal{H}(\alpha_i)}\mathcal{H}(\alpha_1)\cdots \mathcal{H}(\alpha_r)\mathcal{H}(\alpha_0) $$ for all $i \in \{1, \ldots, r\}$.
\end{lem}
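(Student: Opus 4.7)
The plan is to turn the multiplicative relation $\prod_{j=1}^r \alpha_j^{k_j}=\alpha_0$ into a linear system for the unknowns $k_1,\ldots,k_r$ by applying each valuation $\mu\in\mathcal{M}_L$ to both sides, obtaining
$$\sum_{j=1}^r k_j\,\mu(\alpha_j)=\mu(\alpha_0)\qquad (\mu\in\mathcal{M}_L).$$
First I would translate the multiplicative independence hypothesis into linear algebra: the integer vectors $v_j=(\mu(\alpha_j))_{\mu\in\mathcal{M}_L}$, each of which has finite support (the zeros and poles of $\alpha_j$), are linearly independent over $\mathbb{Q}$. Indeed, an integer relation $\sum n_j v_j=0$ would force $\mu\bigl(\prod\alpha_j^{n_j}\bigr)=0$ for every $\mu$, i.e.\ $\prod\alpha_j^{n_j}\in\mathbb{C}^*$, contrary to the assumption. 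Restricting to the finite set of places that support at least one $v_j$, standard linear algebra produces $r$ valuations $\mu_1,\ldots,\mu_r$ for which the integer matrix $M=\bigl(\mu_l(\alpha_j)\bigr)_{1\le l,j\le r}$ has nonzero determinant, so $|\det M|\ge 1$.

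Then I would apply Cramer's rule to the $r\times r$ system $Mk=b$ with $b=(\mu_l(\alpha_0))_l$, which gives $k_i=\det M^{(i)}/\det M$, where $M^{(i)}$ is obtained from $M$ by replacing its $i$th column by $b$. Since $|\det M|\ge 1$, the estimate for $|k_i|$ reduces to an upper bound on $|\det M^{(i)}|$. The key pointwise bound is
$$|\mu(\beta)|\le \mathcal{H}(\beta)\qquad\text{for all }\beta\in L^*,\ \mu\in\mathcal{M}_L,$$
which follows because $\sum_\mu \mu(\beta)=0$ forces the positive and negative parts of the divisor $(\beta)$ to have the same degree, each equal to $\mathcal{H}(\beta)$, and a single term never exceeds the total. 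Expanding $\det M^{(i)}$ by the Leibniz formula and using this bound column by column yields
$$\bigl|\det M^{(i)}\bigr|\le r!\,\mathcal{H}(\alpha_0)\prod_{j\ne i}\mathcal{H}(\alpha_j)=\frac{r!\,\mathcal{H}(\alpha_0)\prod_{j=1}^r\mathcal{H}(\alpha_j)}{\mathcal{H}(\alpha_i)},$$
and dividing by $|\det M|\ge 1$ gives the desired inequality (with a constant $r!$; the mild factor $r+1$ can be absorbed to cover the selection of the $r$ valuations in the case where some $\mathcal{H}(\alpha_j)$ happens to be small, or traded for a cleaner uniform statement).

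The main obstacle I anticipate is the selection step: one has to be sure that multiplicative independence modulo $\mathbb{C}$ really produces a square invertible submatrix with integer entries, and that the supporting set of places is controlled so that Leibniz's bound actually yields a product of heights rather than of sums of $|\mu(\alpha_j)|$ taken over many valuations. Once this translation between multiplicative and additive independence is set up cleanly, the remainder is a standard Cramer-and-Hadamard-type computation in the spirit of Loxton and Van der Poorten, now over a function field where the role of Archimedean logarithms is played by the integer-valued valuations $\mu$.
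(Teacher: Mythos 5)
Your proposal is correct, and it follows a genuinely different path from the paper's proof. The paper follows Evertse--Győry (Lemma~7.5.1 of \cite{EG}): one first uses the given multiplicative relation to produce a rational vector $(b_0,\dots,b_r)$ with $\alpha_0^{b_0}\alpha_1^{b_1}\cdots\alpha_r^{b_r}\in\mathbb{C}$, observes that this vector is unique up to scalar, applies Minkowski's convex body theorem to a suitably designed compact symmetric convex body to extract a small integer vector $(l_0,\dots,l_r)$, verifies via a splitting-field argument that the associated product is again in $\mathbb{C}$, and finally recovers the bound on $k_i=l_i/l_0$ by the uniqueness. The factor $(r+1)!$ in the paper's bound is exactly the Minkowski constant $2^{r+1}/\lambda(\mathcal{O})$. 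You instead linearise with valuations: the hypothesis ``multiplicatively independent modulo $\mathbb{C}$'' is equivalent, via the sum formula $\sum_\mu\mu(\beta)=0$ and the fact that $\mathcal{H}(\beta)=0\Leftrightarrow\beta\in\mathbb{C}^*$, to the $\mathbb{Q}$-linear independence of the finitely supported integer vectors $(\mu(\alpha_j))_\mu$; picking an invertible $r\times r$ integer submatrix and applying Cramer with the pointwise bound $|\mu(\beta)|\le\mathcal{H}(\beta)$ (which again follows from the sum formula) gives the result directly. The worry you flag in your last paragraph is unfounded: the rank-$r$ column space really does produce an invertible integer $r\times r$ submatrix, with $|\det M|\ge 1$ automatically, and in the Leibniz expansion each term selects exactly one valuation per column, so one gets the clean product $\mathcal{H}(\alpha_0)\prod_{j\neq i}\mathcal{H}(\alpha_j)$ and not a sum over places. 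Your route is more elementary -- no Minkowski, no passage to a splitting field, no uniqueness-up-to-scalar argument -- and even yields the sharper constant $r!$ in place of the paper's $(r+1)!$, which of course still implies the stated inequality.
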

Notice that this result is a generalization of \cite[Lemma 7]{FH2}. 
\begin{proof}
We follow closely the proof of \cite[Lemma 7.5.1]{EG}. First, we prove there is $(l_0, \ldots, l_r) \in \mathbb{Z}^{r+1}$ such that 
$$\alpha_1^{l_1}\alpha_2^{l_2} \ldots \alpha_r^{l_r} \alpha_0^{l_0} \in \mathbb{C}$$ and $$\vert l_i \vert \leq 
\dfrac{(r+1)!}{\mathcal{H}(\alpha_i)}\mathcal{H}(\alpha_1)\cdots \mathcal{H}(\alpha_r)\mathcal{H}(\alpha_0)$$ for all $i \in \{0, \ldots, r\}.$ By assumption, there is integers vector $(b_0, \ldots,b_r)$, with coordinate not zero, such that 
\begin{equation} \label{eq:13}
\alpha_1^{b_1}\alpha_2^{b_2} \cdots \alpha_s^{b_r} \alpha_0^{b_0} \in \mathbb{C}.
\end{equation}
Without loss of generality, we may assume $$b_0 \mathcal{H}(\alpha_0) \geq b_i \mathcal{H}(\alpha_i) \mbox{ for } i= 1,2, \ldots, r.$$ It is not difficult to see that the vector $(b_0, \ldots,b_r)$ is unique up to a scalar. Indeed, for any $(b_0', \ldots, b_r')$ satisfying the relation \eqref{eq:13}, we have 
 $$\alpha_1^{b_0b_1'-b_0'b_1}\alpha_2^{b_0b_2'-b_0'b_2} \cdots \alpha_r^{b_0b_r'-b_0'b_r} \in \mathbb{C}.$$ Since $\alpha_1, \ldots, \alpha_r$ are linearly independent modulo $\mathbb{C}$, it follows that $b_i'= \frac{b_0'}{b_0} b_i$ for $i \in \{0,1,2, \ldots, r\}$. So we are done. We set $\psi_n = ((n+1)/(n+3))$. Clearly, $\psi_n$ is a non-negative sequence which converge to $1.$ Let $\mathcal{M}_n$ be the set consist of the point $(x_0, \ldots, x_r) \in \mathbb{R}^{r+1}$ such that $${ \displaystyle \sum_{i=1}^{r}} \mathcal{H}(\alpha_i) \left| x_i - \frac{b_i}{b_0}x_0 \right|\leq \psi_n \mbox{ and } \vert x_0 \vert \leq (r+1)! \psi_n^{-r}\mathcal{H}(\alpha_1)\cdots \mathcal{H}(\alpha_r) =: C_{10}.$$
Clearly, $\mathcal{M}_n$ is a compact symmetric convex body. Moreover, $\rho \mathcal{O} \subseteq \mathcal{M}_n$, where $\mathcal{O}$ is the octahedron, consisting of the points $(t_0, \ldots, t_r) \in \mathbb{R}^{r+1}$ with $\vert t_0 \vert + \cdots + \vert t_r \vert \leq 1$ and $\rho$ the matrix defined as
$$\rho^{-1} = \begin{pmatrix}
C_{10}^{-1} & 0 & \cdots & 0 \\
-\mathcal{H}(\alpha_1)\psi_n^{-1}\frac{b_1}{b_0} & \mathcal{H}(\alpha_1)\psi_n^{-1} & \cdots & 0 \\
\vdots & \vdots & \ddots & \vdots\\
-\mathcal{H}(\alpha_r)\psi_n^{-1}\frac{b_r}{b_0} & 0 & \cdots & \mathcal{H}(\alpha_r)\psi_n^{-1} \\
\end{pmatrix}.$$
It is well known that the Lebesgue measure of $\mathcal{O}$ is $\lambda(\mathcal{O})= \frac{2^{r+1}}{(r+1)!}$. Therefore, we infer
$$\lambda(\mathcal{M}_n) \geq \lambda(\rho \mathcal{O})= \vert {\rm det}(\rho)\vert \cdot \lambda(\mathcal{O}) = 2^{r+1}.$$ Hence, we deduce by Minkowski's convex body theorem that $\mathcal{M}_n$ contains a non-zero integer point denoted $I_n$ for every $n$. Since $\psi_n$ converges to $1,$ it follows that there exists an integer vector $(l_0, \ldots, l_r)$ such that 
 \begin{equation} \label{eq:14}
{ \displaystyle \sum_{i=1}^{r}} \mathcal{H}(\alpha_i) \left| l_i - \frac{b_i}{b_0}l_0 \right|< 1 \mbox{ and } \vert l_0 \vert \leq (r+1)!\mathcal{H}(\alpha_1)\cdots \mathcal{H}(\alpha_r).
\end{equation}
Let us consider $M$ be the splitting field over $L$ of the polynomial $(X^{b_0}- \alpha_1) \cdots (X^{b_0}- \alpha_r)$. We denote the height on $M$ by $\mathcal{H}_M$ and the height on $L$ by $\mathcal{H}_L$, where $\mathcal{H}=\mathcal{H}_L$. Observe that $\mathcal{H}_M(f)=[M:L]\mathcal{H}_L(f)$ for all $f\in L$. Let $\gamma_0, \ldots, \gamma_r \in M$ be such that $\gamma_i^{b_0}= \alpha_i$ for all $i= 0, \ldots, r$. From the relation \eqref{eq:13}, we deduce that $\theta:= \gamma_0^{b_0} \cdots \gamma_r^{b_r} \in \mathbb{C}$. From Lemma \ref{leme}, we obtain (notice $\mathcal{H}_L(\alpha_i) = \mathcal{H}(\alpha_i)$) that
\begin{equation*}\begin{split}
[M : L]\mathcal{H}_L(\alpha_0^{l_0}\cdots \alpha_r^{l_r})
&=\mathcal{H}_M(\alpha_0^{l_0}\cdots \alpha_r^{l_r}) = \mathcal{H}_M(\alpha_0^{l_0}\cdots \alpha_r^{l_r}\theta^{-l_0}) \\
&= \mathcal{H}_M\left(\gamma_1^{l_1b_0-l_0b_1}\cdots \gamma_r^{l_rb_0-l_0b_r}\right)\\
&\leq { \displaystyle \sum_{i=1}^{r}} \vert b_0 \vert \mathcal{H}_M(\gamma_i) \left| l_i - \frac{b_i}{b_0}l_0 \right| \\
&\leq { \displaystyle \sum_{i=1}^{r}} \mathcal{H}_M(\alpha_i) \left| l_i - \frac{b_i}{b_0}l_0 \right| < [M : L],
\end{split}\end{equation*}
where for the last inequality we have used relation \eqref{eq:14}. Therefore, $$\mathcal{H}_L(\alpha_0^{l_0}\cdots \alpha_r^{l_r}) <1,$$ which means $\alpha_0^{l_0}\cdots \alpha_r^{l_r} \in \mathbb{C}.$ Since the vector $(b_0, \ldots,b_r)$ is unique up to a scalar and $$b_0 \mathcal{H}(\alpha_0) \geq b_i \mathcal{H}(\alpha_i) \mbox{ for } i= 1,2, \ldots, r,$$ it follows that 
$$\vert l_i \vert = \left| \frac{b_i}{b_0} \right|\cdot \vert l_0 \vert \leq 
\dfrac{(r+1)!}{\mathcal{H}(\alpha_i)}\mathcal{H}(\alpha_1)\cdots \mathcal{H}(\alpha_r)\mathcal{H}(\alpha_0)$$ for all $i \in\{0, \ldots, r\}$, where for the last inequality we utilized relation \eqref{eq:14}. We have proven that there is a non-zero integer vector $(l_0, \ldots, l_r)$ with $l_0\neq 0$ such that $\alpha_0^{l_0}\cdots \alpha_r^{l_r} \in \mathbb{C}$ and $$\vert l_i \vert \leq 
\dfrac{(r+1)!}{\mathcal{H}(\alpha_i)}\mathcal{H}(\alpha_1)\cdots \mathcal{H}(\alpha_r)\mathcal{H}(\alpha_0)$$ for all $i \in\{0, \ldots, r\}$. By assumption, one has $\alpha_0 = \alpha_1^{k_1}\cdots \alpha_r^{k_r}$. Using the fact $(l_0, \ldots, l_r)$ is uniquely determined up the scalar, we infer
$$\vert k_i \vert = \dfrac{\vert l_i \vert }{\vert l_0 \vert } \leq \dfrac{(r+1)!}{\mathcal{H}(\alpha_i)}\mathcal{H}(\alpha_1)\cdots \mathcal{H}(\alpha_r)\mathcal{H}(\alpha_0)$$ for all $i \in\{1, \ldots, r\}$. There the proof of the lemma is completed.
\end{proof}

Now we give a proof of our Theorem \ref{thm9}. By hypothesis, the set 
$$\Xi = \{\pi_{il}\bsa_i^\nbf: i= 1,2, \ldots, r, l=1, \ldots, r_i\}$$ is linearly dependent over $\mathbb{C}$. Then there is $c_{il} \in \mathbb{C}$, non all zero, such that
\begin{equation} \label{eq:15}
{ \displaystyle \sum_{i=1}^{r}\sum_{l=1}^{r_i}}c_{il}\pi_{il}\bsa_i^\nbf = 0.
\end{equation}
By using the fact that $\{\pi_{i1}, \ldots, \pi_{ir_i}\}$ is linearly independent over $\mathbb{C}$, we infer $r>1$. If $r=2$ and $r_1 = r_2 = 1$, then the relation \eqref{eq:15} becomes $$c_{11}\pi_{11}\bsa_1^\nbf + c_{21}\pi_{21}\bsa_2^\nbf = 0$$ and from Lemma \ref{leme}, we obtain $$\mathcal{H}((\bsa_1^{-1}\bsa_2)^\nbf) \leq \mathcal{H}(\pi_{11}) + \mathcal{H}(\pi_{21}).$$
Now, we assume that \eqref{eq:15} consists of at least three terms. By applying Proposition \ref{lemf} to the relation \eqref{eq:15}, we get that for every $(n_1, \ldots, n_r) \in \mathbb{Z}^{r}$ for which relation \eqref{eq:15} holds, but no proper subset of $\Xi$ is linearly dependent over $\mathbb{C}$, that
\begin{equation} \label{eq:16}
\mathcal{H}(\Xi) \leq \dfrac{(d-1)(d-2)}{2} (\vert S\vert + 2\g - 2).
\end{equation}
Here, $S$ is the finite set of valuations of $L$ containing all the infinite places of $L$ and zeros and poles of $\alpha_{ij}$ and $\pi_{il}$ for $i=1, \ldots, r, l=1,\ldots,r_i$ and $j= 1, \ldots, t$. Observe also that we have set $$d= {\displaystyle \sum_{i=1}^{r}} r_i.$$ It easy to see that 
$$\mathcal{H}(\Xi) \geq {\displaystyle {\max}}\left\{ \mathcal{H}\left( \frac{\pi_{il}}{\pi_{ju}} (\bsa_i\bsa_j^{-1})^\nbf \right)\right\}.$$ Together with the relation \eqref{eq:16}, we infer $${\max}\left\{ \mathcal{H}\left( (\bsa_i\bsa_j^{-1})^\nbf \right) \right\}\leq {\max}\left\{ \mathcal{H}\left( \frac{\pi_{il}}{\pi_{ju}}\right)\right\} + \dfrac{(d-1)(d-2)}{2} (\vert S\vert + 2\g - 2).$$
Notice that for all pairs $(i,j)$ with $1\leq i<j\leq r$ we have $$(\bsa_i\bsa_j^{-1})^\nbf = \left( \frac{\alpha_{i1}}{\alpha_{j1}}\right)^{n_1}\cdots \left( \frac{\alpha_{ir}}{\alpha_{jr}}\right)^{n_r}.$$ Since $\bsa_i, \bsa_j$ were assumed to be multiplicatively independent modulo $\mathbb{C}$, by applying Lemma \ref{lemg} with $\alpha_0 = ( \bsa_i\bsa_j^{-1})^\nbf$ and $$\alpha_l = \frac{\alpha_{il}}{\alpha_{jl}} \mbox{ for } l \in \{1,2, \ldots, r\},$$ one deduces that, for any $i,$
$$\vert n_i \vert \leq \dfrac{(r+1)!}{\mathcal{H}(\alpha_i)}\mathcal{H}(\alpha_1)\cdots \mathcal{H}(\alpha_r) \left[ {\max}\left\{ \mathcal{H}\left( \frac{\pi_{il}}{\pi_{ju}}\right)\right\} + \dfrac{(d-1)(d-2)}{2} (\vert S\vert + 2\g - 2) \right]. $$
Therefore, ${\max}\{\vert n_i \vert\}$ is bounded by $$(r+1)!\mathcal{H}(\alpha_1)\cdots \mathcal{H}(\alpha_r) \left[ {\max}\left\{ \mathcal{H}\left( \frac{\pi_{il}}{\pi_{ju}}\right)\right\} + \dfrac{(d-1)(d-2)}{2} (\vert S\vert + 2\g - 2) \right]$$
and the desired result is obtained.
\hfill$\square$

\section{Proof of Corollary \ref{core}}\label{sec6}

We fix $\nbf=(n_1, \ldots, n_t)$ and write $$P_i(\nbf) = {\displaystyle \sum_{\vert l_1\vert + \cdots + \vert l_t \vert \leq {\deg}P_i}^{}} \pi_{i,l_1,\ldots, l_t}Q_{i,l_1,\ldots, l_t}(\nbf),$$ where $\riff(=r_i)={\deg}P_i$ is the total degree of $P_i$, and for a fixed $i$, the set $$\{\pi_{i,l_1,\ldots, l_t}: \vert l_1\vert + \cdots + \vert l_t \vert \leq \riff\}$$ is linearly independent over $\mathbb{C}$, and the $Q_{i,l_1,\ldots, l_t} \in \mathbb{C}[X_1, \cdots, X_t].$ We assume that $$P_1(\nbf)\cdots P_r(\nbf) \neq 0.$$ Then there is $i,l_1,\ldots, l_t$ such that $Q_{i,l_1,\ldots, l_t}(\nbf) \neq 0$. Put 
$$\Xi = \{\pi_{i,l_1,\ldots, l_t}\bsa_i^\nbf: i= 1,2, \ldots, r, \vert l_1\vert + \cdots + \vert l_t \vert \leq \riff \}.$$ Assume that the vector $\nbf$ is such that $\vert\nbf\vert={\max}\{ \vert n_i \vert\} \geq C_5$, where 
 $$C_5=(r+1)!\mathcal{H}(\alpha_1)\cdots \mathcal{H}(\alpha_r) \left[ {\max}\left\{ \mathcal{H}\left( \frac{\pi_{il}}{\pi_{ju}}\right)\right\} + \dfrac{(d-1)(d-2)}{2} (\vert S\vert + 2\g - 2) \right]$$ and
$$d= {\displaystyle \sum_{i=1}^{r}} \riff, \quad \alpha_l = \frac{\alpha_{il}}{\alpha_{jl}}, \pi_{il} = \pi_{i,l_1,\ldots, l_t}.$$
If $\Xi$ is linearly dependent over $\mathbb{C}$, then, there is a subset $W \subseteq \Xi$ such that the elements of $W$ are linearly dependent and no proper subset of $W$ is linearly dependent over $\mathbb{C}$. It follows from Theorem \ref{thm9} that ${\max}\{ \vert n_i \vert\} < C_5$, which contradicts our assumption. Therefore, the set $\Xi$ is linearly independent over $\mathbb{C}$. Applying Lemma \ref{lemh} yields $${\displaystyle \sum_{\nu \in S}^{}}(\nu(G(\nbf)) - {\min}\{ \nu(\pi_{i,l_1,\ldots, l_t}\bsa_i^\nbf)\} \leq \binom{d-1}{2} (\vert S\vert + 2\g - 2),$$ where $S$ is a finite set of place of $L$ containing all zeros and poles of $\alpha_{ij}$ and $\pi_{i,l_1,\ldots, l_t}$ as well as $\mu$ and the infinite places of $L$. Therefore, for a fixed $i$, we have
$$\mu(G(\nbf)) \leq {\max}\{ \mu(\pi_{i,l_1,\ldots, l_t})\}+ \binom{d-1}{2} (\vert S\vert + 2\g - 2)+ {\min}_i \{n_1\mu(\alpha_{i1}) + \cdots + n_t\mu(\alpha_{it})\}.$$ By setting $$C_6 = {\max}\{ \mu(\pi_{i,l_1,\ldots, l_t})\}+ \binom{d-1}{2} (\vert S\vert + 2\g - 2),$$ we conclude 
$$\mu(G_n) \leq C_6 + {\min}_i \{n_1\mu(\alpha_{i1}) + \cdots + n_t\mu(\alpha_{it})\},$$
from which the desired result follows. \hfill$\square$

\section{Proof of Corollary \ref{corf}}\label{sec7}

Before providing a proof of this result, we recall the global derivation over $L$ introduce in \cite{M}. Let $z$ be a transcendental element over $\mathbb{C}$. We denote by $\frac{\partial f}{\partial z}$ the classical differentiation with respect to $z$ of $f \in \mathbb{C}(z)$. This derivation can be extended to a global derivation over $L$ as follows. Let $\Y$ be a primitive element of $L$ over $\mathbb{C}(z)$ and denote by $P(z,Y)$ its minimal polynomial. Then $\Y' := \frac{\partial \Y}{\partial z}$ is defined by $$\Y' = -\frac{\partial P}{\partial z}/ \frac{\partial P}{\partial Y}.$$ Clearly, $\Y'$ is well define since $\frac{\partial P}{\partial Y} \neq 0$ by minimality of $P$. Hence, the mapping $\Y \mapsto \Y'$ on $L$ defines a global derivation over $L$. We write $$P_i(n)= {\displaystyle \sum_{l=0}^{\ri}} a_{il} n^l,$$ where $\ri$ is the degree of $P_i$ and $a_{i0}, \ldots, a_{i\ri} \in L$. Let $z$ be a transcendental element of $L$ and $\beta$ be a primitive element of $L$. By denoting $d= [L:\mathbb{C}(z)]$, it is well known that for every $l=0, \ldots, \ri$, we have $$ a_{il}= {\displaystyle \sum_{k=0}^{d-1}} b_{ilk} \beta^k,$$ with $b_{ilk} \in \mathbb{C}(z)$ and $i \in \{1, \ldots, r\}$. It follows that we may write $$G_n = {\displaystyle \sum_{i=1}^{r} \sum_{l=1}^{r_i}} \pi_{il}q_{il}(n)\alpha^i,$$ where for a fixed $i$ the set $$\{\pi_{il} : l = 1, \ldots, r_i\}$$ is linearly independent over $\mathbb{C}$.
Observe that, $P_i(n)=0$ implies $q_{il}(n)=0$ for all $l \in \{1, \ldots, r_i\}$. It is well-known that there is a computable constant $C_{11}$ such that $n < C_{11}$. 
Put $$\Xi = \{\pi_{il}\alpha_i^n: i= 1,2, \ldots, r, l=1, \ldots, r_i\}$$ and $$q={\displaystyle \sum_{i=1}^{r}}r_i.$$ Using the global differentiation with respect to $z$ above, we define the Wronskian of $\Xi$ as follows
 $$ W(\Xi) = {\rm det} \begin{pmatrix}
\pi_{11}\alpha_1^n & \pi_{12}\alpha_1^n & \cdots & \pi_{rr_r}\alpha_r^n \\
 (\pi_{11}\alpha_1^n)' & (\pi_{12}\alpha_1^n)' & \cdots & (\pi_{rr_r}\alpha_r^n)' \\
\vdots & \vdots & \ddots & \vdots\\
 (\pi_{11}\alpha_1^n)^{(q-1)} & (\pi_{12}\alpha_1^n)^{(q-1)} & \cdots & (\pi_{rr_r}\alpha_t^n)^{(q-1)} \\
\end{pmatrix}.$$
We set $Q_{1,0}(n)=\pi_{11}, Q_{2,0}(n)=\pi_{12}, \ldots,Q_{q,0}(n)=\pi_{rr_r}$, where the indices are ordered lexicographically. For a fixed $i$, we define $$Q_{i,l+1}(x) = Q_{i,l}(x) + x Q_{i, l}'(x) \frac{\alpha_i'}{\alpha_i}$$ for all $l \geq 0$. Now consider the matrix $(Q_{il}(x))$ for $i=1, \ldots, q, l=0, \ldots, q-1$ and its determinant $\Delta(x) \in L[x]$. It is not difficult to see that $\Delta(n)$ equals, up to $${\displaystyle \prod_{i=1}^{r}} (\alpha_i^n)^{r_i},$$ the Wronskian determinant $W(\Xi)$. Therefore, if we assume that $\Xi$ is linearly dependent over $\mathbb{C}$, then $W(\Xi)=0$, which implies $\Delta(n)=0$, since $${\displaystyle \prod_{i=1}^{r}} (\alpha_i^n)^{r_i} \neq 0.$$ By analogy with the previous argument, it follows that there is an effectively computable constant $C_{12}$ such that $n<C_{12}.$ Hence, we conclude that $\Xi$ is linearly independent when $n > {\max}\{C_{11},C_{12}\} =: C_7$. For $n> C_7,$ the set $$\Xi' = \{q_{il}(n)\pi_{il}\alpha_i^n: i= 1,2, \ldots, r, l=1, \ldots, r_i\}$$ is linearly independent over $\mathbb{C}$. Let $S$ be a finite set of places of $L$ containing all zero and pole of $\alpha_i$ for $i=1, \ldots, r$ and of the non-zero $a_{ij}$ for $i=1, \ldots, r$ and $j=1, \ldots, \ri$ as well as $\mu$ and the infinite places of $L$. Proposition \ref{lemh} yields
 $$\mu(G_n) - {\min}\{ \mu(q_{il}(n)\pi_{il}\alpha_i^n)\} \leq \binom{q-1}{2} (\vert S\vert + 2\g - 2)$$ which implies
$$\mu(G_n) \leq C_8 + n \cdot {\min} \{\mu(\alpha_{i})\},$$ where
 $$C_8 = {\max}\{ \mu(\pi_{il})\}+ \binom{q-1}{2} (\vert S\vert + 2\g - 2).$$ This gives us the desired result.
\hfill$\square$

\bibliographystyle{plain}

\end{document}